\newtheorem{theorem}{Theorem}[section]
\newtheorem{lemma}[theorem]{Lemma}
\newtheorem{proposition}[theorem]{Proposition}
\newtheorem{corollary}[theorem]{Corollary}
\theoremstyle{definition}
\newtheorem{remark}[theorem]{Remark}
\numberwithin{equation}{section}
\begin{document}

\title[real analytic expansion of the spectral projections]
{Real analytic expansion of spectral projection and extension of Hecke-Bochner identity}

\author{ Rajesh K. Srivastava}
\address{School of Mathematics, Harish-Chandra Research Institute, Allahabad, India 211019.}
\email{rksri@hri.res.in}

\subjclass[2000]{Primary 43A85; Secondary 44A35}

\date{\today}


\keywords{Hecke-Bochner identity, Heisenberg group,\\ Laguerre polynomials, spherical
harmonics, twisted convolution.}

\begin{abstract}
In this article, we review the Weyl correspondence of bigraded spherical harmonics
and use it to extend the Hecke-Bochner identities for the spectral projections
$f\times\varphi_k^{n-1}$ for function $f\in L^p(\mathbb C^n)$ with $1\leq p\leq\infty.$
We prove that spheres are sets of injectivity for the twisted spherical means with real
analytic weight. Then, we derive a real analytic expansion for the spectral projections
$f\times\varphi_k^{n-1}$ for function $f\in L^2(\mathbb C^n).$ Using this expansion we
deduce that complex cone can be a set of injectivity for the twisted spherical means.
\end{abstract}

\maketitle

\section{Introduction}\label{section1}
Weyl correspondence is a natural question of asking about an operator analogue of
the non-commutative polynomials on $\mathbb C^n.$ Let $\lambda\in\mathbb R\setminus\{0\}$
and $\mathbb H$ be a separable Hilbert space. Let
$W_1,\ldots,W_n,$ $ W_1^+,\ldots,W_n^+$ be unbounded operators on $\mathbb H$ satisfying
\[W_j^+=W_j^{\ast} \text{ and } \left[W_j^+, -W_j\right]=\frac{\lambda}{2}I,~ j=1,2,\ldots,n,\]
on a dense subspace $\mathcal D$ of $\mathbb H$ and all other commutators are zero.
Consider a polynomial $P(z)=z_1^2\bar z_1.$ In general, the question (asked by Weyl)
about the possible expression for $P(W,W^+),$ is still open. In 1984, Geller has
partially answered Weyl's question about operator analogue of the harmonic polynomials.
Let $z\in\mathbb C^n.$ For $\alpha, \beta\in \mathbb N^n$ and $P(z)=z^\alpha\bar z^\beta,$
define
\[\tau(P)=(W^+)^\beta W^\alpha \text { and } \tau'(P)=W^\alpha(W^+)^\beta.\]
The maps $\tau$ and $\tau'$ can be linearly extended to any polynomial on
$\mathbb C^n.$ In a long paper  \cite{Ge}, Geller prove that for any harmonic
polynomial $P$, $\tau(P)=\tau'(P).$ Using this, an analogue of Hecke-Bochner
identity for the Weyl transform has been derived, (see \cite{Ge}, p.645, Theorem 4.2).

\smallskip
A continuous function $f$ on $\mathbb R^n$  can be decomposed in terms of
spherical harmonics as
\begin{equation}\label{exp9}
f(x)=\sum_{k=0}^\infty a_{k,j}(\rho) Y_{k,j}(\omega),
\end{equation}
where $x=\rho\omega,~\rho=|x|,~\omega\in S^{n-1}$ and $\{Y_{kj}(\omega):
1,2,\ldots, d_k\}$ is an orthonormal basis for the space $V_k$ of the homogeneous
harmonic polynomials in $n$ variables of degree $k,$ restricted to the unit sphere
$S^{n-1},$ while the series in the right-hand side converges locally uniformly to $f.$
However, for more details, see \cite{SW}.

\smallskip

The well know Heche-Bochner identity says that the Fourier transform of any
piece in the above decomposition (\ref{exp9}) is preserved, (see \cite{SW}).
That is, the Fourier transform of $\tilde aP_k\in L^1 (\text{or}~ L^2),$ satisfies
$\widehat{{\tilde aP_k}}(x)=\tilde b(|x|)P_k(x),$ where $P_k$ is a solid spherical
harmonic of degree $k.$
An analogue of the Hecke-Bochner identity for the spectral projections
$f\times\varphi_k^{n-1}$ for function $f\in L^1 (\text{or }L^2),$
has been obtained, (see \cite{T}, p.70). Using the
Weyl correspondence of the spherical harmonics, we give a much simper proof of this
result. Further, we extend this result for $f\in L^p(\mathbb C^n)$ with
$1\leq p\leq\infty.$  As another application of the Weyl correspondence of the
spherical harmonics, we prove that sphere $S_R(o)=\{z\in\mathbb C^n: |z|=R\}$
is a set of injectivity for the twisted spherical means (TSM) with real analytic weight,
for the radial functions on $\mathbb C^n.$ An analogous for the Euclidean spherical
mean has been worked out in \cite{NRR}.

\smallskip

Since Laguerre function $\varphi_k^{n-1}$ is an eigenfunction of the special
Hermite operator $A=-\Delta_z+\frac{1}{4}|z|^2$ with eigenvalue $2k+n,$ the
projection $f\times\varphi_k^{n-1}$ is also an eigenfunction of $A$ with
eigenvalue $2k+n.$ As $A$ is an elliptic operator and eigenfunction of an
elliptic operator is real analytic \cite{N}, the projection $f\times\varphi_k^{n-1}$
must be a real analytic function on $\mathbb C^n.$ As a consequence of weyl
correspondence, we derive an important real analytic expansion for the spectral
projections $f\times\varphi_k^{n-1}$'s for function $ f\in L^2(\mathbb C^n),$
which we call Hecke-Bochner-Laguerre series for spectral projection. In the complex
plane, it is much simpler and it has been used in \cite{Sri2}, for proving a result
that any Coxeter system of even number of lines is a set of injectivity for the
twisted spherical means for $L^p(\mathbb C)$ with $1\leq p\leq2.$ However, question
of any Coxeter system of odd number of lines to be a set of injectivity for the
twisted spherical means is still unsolved.

\smallskip

As an application of expansion for the spectral projections $f\times\varphi_k^{n-1},$
we have shown that any complex cone is a set of injectivity for the twisted spherical
means for $L^p(\mathbb C^n), ~1\leq p\leq2$ as long as it does not lay on the level
surface of any bi-graded homogeneous harmonic polynomial on $\mathbb C^n.$ An analogous
result has been proved for the Euclidean spherical mean over the class of continuous
functions by Zalcman et al., (see \cite{AVZ}).

\section{Notation and Preliminaries}\label{section2}
We define the twisted convolution which arises in the study of
group convolution on Heisenberg group. The group $\mathbb H^n,$
as a manifold, is $\mathbb C^n \times\mathbb R$ with the group law
\[(z, t)(w, s)=(z+w,t+s+\frac{1}{2}\text{Im}(z.\bar{w})),~z,w\in\mathbb C^n\text{ and }t,s\in\mathbb R.\]
The  group convolution of  function $f,~g\in L^1(\mathbb H^n)$ is defined by
\begin{equation} \label{exp22}
f\ast g(z, t)=\int_{H^n}~f((z,t)(-w,-s))g(w,s)~dwds.
\end{equation}
An important technique in many problem on $\mathbb H^n$ is to take partial Fourier transform
in the $t$-variable to reduce matters to $\mathbb C^n$. Let
\[f^\lambda(z)=\int_\mathbb R f(z,t)e^{i \lambda t} dt\]
be the inverse Fourier transform of $f$ in the $t$-variable. Then a simple calculation shows
that
\begin{eqnarray*}
(f \ast g)^\lambda(z)&=&\int_{-\infty}^{~\infty}~f \ast g(z,t)e^{i\lambda t} dt\\
&=&\int_{\mathbb C^n}~f^\lambda (z-w)g^\lambda(w)e^{\frac{i\lambda}{2}\text{Im}(z.\bar{w})}~dw\\
&=&f^\lambda\times g^\lambda(z).
\end{eqnarray*}
Thus the group convolution  $f\ast g$ on the Heisenberg group can be
studied using the $\lambda$-twisted convolution $f^\lambda
\times_\lambda g^\lambda$ on $\mathbb C^n.$ For $\lambda \neq 0,$
a further scaling argument shows that it is enough to study the
twisted convolution for the case of $\lambda=1.$

\bigskip

We need the following basic facts from the theory of bigraded
spherical harmonics, (see \cite{D, Gr1, T} for details). We shall use
the notation $K=U(n)$ and $M=U(n-1).$ Then, $S^{2n-1}\cong K/M$ under
the map $kM\rightarrow k.e_n,$ $k\in U(n)$ and $e_n=(0,\ldots
,1)\in \mathbb C^n.$ Let $\hat{K}_M$ denote the set of all
equivalence classes of irreducible unitary representations of $K$
which have a nonzero $M$-fixed vector. It is known that for each
representation in $\hat{K}_M$ has a unique nonzero $M$-fixed vector,
up to a scalar multiple.

For a $\delta\in\hat{K}_M,$ which is realized on $V_{\delta},$ let
$\{e_1,\ldots, e_{d(\delta)}\}$ be an orthonormal basis of
$V_{\delta}$ with $e_1$ as the $M$-fixed vector. Let
$t_{ij}^{\delta}(k)=\langle e_i,\delta (k)e_j \rangle ,$ $k\in K$
and $\langle , \rangle$ stand for the innerproduct on $V_{\delta}.$
By Peter-Weyl theorem, it follows that $\{\sqrt{d(\delta
)}t_{j1}^{\delta}:1\leq j\leq d(\delta ),\delta\in\hat{K}_M\}$ is an
orthonormal basis of $L^2(K/M)$ (see \cite{T}, p.14 for details).
Define $Y_j^{\delta} (\omega )=\sqrt{d(\delta )}t_{j1}^{\delta}(k),$
where $\omega =k.e_n\in S^{2n-1},$ $k \in K.$ It then follows that
$\{Y_j^{\delta}:1\leq j\leq d(\delta ),\delta\in \hat{K}_M, \}$
forms an orthonormal basis for $L^2(S^{2n-1}).$

For our purpose, we need a concrete realization of the
representations in $\hat{K}_M,$ which can be done in the following
way. See \cite{Ru}, p.253, for details.
For $p,q\in\mathbb Z_+$, let $P_{p,q}$ denote the space of all
polynomials $P$ in $z$ and $\bar{z}$ of the form
$$
P(z)=\sum_{|\alpha|=p}\sum_{|\beta|=q}c_{\alpha\beta} z^\alpha
\bar{z}^\beta.
 $$
Let $H_{p,q}=\{P\in P_{p,q}:\Delta P=0\}.$ The elements of $H_{p,q}$ are
called the bigraded solid harmonics on $\mathbb C^n.$ The group  $K$
acts on $H_{p,q}$ in a natural way. It is easy to see that the space
$H_{p,q}$ is $K$-invariant. Let $\pi_{p,q}$ denote the corresponding
representation of $K$ on $H_{p,q}.$ Then representations in
$\hat{K}_M$ can be identified, up to unitary equivalence, with the
collection $\{\pi_{p,q}: p,q \in \mathbb Z_+\}.$

Define the bigraded spherical harmonic by $Y_j^{p,q}(\omega
)=\sqrt{d(p,q )}t_{j1}^{p,q}(k).$
Then $\{Y_j^{p,q}:1\leq j\leq d(p,q),p,q \in \mathbb Z_+ \}$ forms
an orthonormal basis for $L^2(S^{2n-1}).$ Therefore, for a continuous
function $f$ on $\mathbb C^n,$ writing $z=\rho \,\omega,$ where
$\rho
> 0$ and $\omega \in S^{2n-1},$ we can expand the function $f$ in
terms of spherical harmonics as
\begin{equation}\label{Bexp4}
f (\rho\omega) = \sum_{p,q\geq0} \sum_{j=1}^{d(p,q)} a_j^{p,q}(\rho)
Y_j^{p,q}(\omega),
\end{equation}
where the series on the right-hand side converges uniformly on every compact
set $K\subseteq\mathbb C^n.$ The functions $ a_j^{p,q} $ are called the
spherical harmonic coefficients of $f$.

\section{A review on the Weyl correspondence of spherical harmonics}\label{section3}
In this section, we revisit the Weyl correspondence of the spherical harmonics.
Then, we derive some important identities which describe the action of Weyl correspondence
of a harmonic polynomial to the Lagurrue functions $\varphi_k^{n-1}$'s.
We use these identities to give a very short and simple proof of the Hecke-Bochner
identities for the spectral projections. We would like to collect some of the
preliminaries from the work of Geller \cite{Ge}, where he has established the
Weyl correspondence of the spherical harmonics.

\smallskip

Define the symplectic Fourier transform $\mathcal F$ on Schwartz space $\mathcal S(\mathbb C^n)$ by
\begin{eqnarray*}
\mathcal F(f)(z)&=&\int_{\mathbb C^n}e^{-\frac{i}{2}\text{Im}(z.\bar{w})}f(w)dw\\
                &=&\int_{\mathbb C^n}\exp\left(\frac{-z.\bar w+\bar z.w}{4}\right)f(w)dw.
 \end{eqnarray*}
Let $\mathcal B(\mathbb H)$ be the space of all bounded linear operators on $\mathbb H.$
Since the operator $-i\left(-z.W^++\bar z.W\right)$ is essentially self-adjoint, therefore,
we can define the operator analogue of function $\mathcal F^{-1}(f)$ by an operator
 $\mathcal{G} : \mathcal S(\mathbb C^n)\rightarrow \mathcal B(\mathbb H),$ which is given by
 \[\mathcal{G}f=\int_{\mathbb C^n}\exp\left(\frac{-z.W^++\bar z.W}{4}\right)f(z)dz.\]
The operator $\mathcal G$ is known as the Weyl transform.
A composite operator $\mathcal{W}: \mathcal S(\mathbb C^n)\rightarrow \mathcal B(\mathbb H)$
which is defined by
\[\mathcal{W}(f)=\mathcal G\circ\mathcal F^{-1}(f),\]
is the Weyl correspondence of $f$. We state the following results which
are proved for the Weyl correspondence $\mathcal W$ of the spherical harmonics,
(see \cite{Ge}). We use these results for proving our main results.
\begin{lemma}\label{lemma1}\emph{\cite{Ge}}
If $P$ is a harmonic polynomial then
\[ \mathcal  W(P)=\tau(P)=\tau'(P).\]
\end{lemma}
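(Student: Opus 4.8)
The statement asserts two equalities: $\tau(P) = \tau'(P)$ for harmonic $P$, and the identification of this common operator with $\mathcal{W}(P)$. I would organize the proof in two stages accordingly.

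First I would establish $\tau(P) = \tau'(P)$ for harmonic polynomials. The idea is to reduce to a generating-function (or commutation) computation. Recall that the only nontrivial commutator is $[W_j^+, -W_j] = \tfrac{\lambda}{2}I$, equivalently $[W_j, W_j^+] = \tfrac{\lambda}{2}I$. So rearranging $(W^+)^\beta W^\alpha$ into the form $W^\alpha (W^+)^\beta$ (and vice versa) generates lower-order correction terms, each of which picks up a factor that corresponds, on the polynomial side, to applying a partial Laplacian $\partial_{z_j}\partial_{\bar z_j}$. Making this precise, one shows that $\tau(P) - \tau'(P)$ is a linear combination of operators $\tau(\Delta_j P)$-type terms where the total contribution assembles into something proportional to $\tau$ (or $\tau'$) applied to $\Delta P$. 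Since $P$ is harmonic, $\Delta P = 0$, and the difference vanishes. The cleanest way to run this is by induction on $\deg P = p + q$ (with $P \in H_{p,q}$), peeling off one creation and one annihilation operator at a time and tracking the commutator corrections; alternatively one can use the bigraded structure and the known fact that $H_{p,q}$ is $K$-irreducible together with Schur's lemma to argue that $\tau - \tau'$ is an intertwining map that must be scalar, then evaluate on a highest weight vector. Either route is essentially Geller's, so I would cite \cite{Ge} for the detailed bookkeeping rather than reproduce it.

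Second, I would identify the common value with $\mathcal{W}(P) = \mathcal{G} \circ \mathcal{F}^{-1}(P)$. Here the point is that $\mathcal{W}$ is, by construction, the operator-valued functional calculus attached to the symbols $W, W^+$ via the exponential $\exp\big(\tfrac{1}{4}(-z\cdot W^+ + \bar z \cdot W)\big)$, and when one expands this exponential and matches Taylor coefficients in $z, \bar z$ against the symplectic Fourier transform $\mathcal{F}$, a monomial $z^\alpha \bar z^\beta$ is sent to a \emph{symmetrized} (Weyl-ordered) product of the $W_j^+$'s and $W_j$'s. For a general polynomial these symmetrized products differ from both $\tau(P)$ and $\tau'(P)$ by commutator terms. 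The key observation is that for \emph{harmonic} $P$ the symmetrization ambiguity again only involves $\Delta P$, so $\mathcal{W}(P)$ collapses onto the common value $\tau(P) = \tau'(P)$. Concretely I would verify this on monomials of low bidegree, set up the induction on $p+q$, and invoke the first stage to kill the correction terms.

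The main obstacle is the second stage: carefully comparing Weyl (symmetric) ordering with the two extreme orderings $\tau$ and $\tau'$ and showing that all discrepancies are captured by the Laplacian. This is a combinatorial identity about reorderings in the Weyl algebra, and getting the constants and the induction hypothesis exactly right is the delicate part; the harmonicity of $P$ is used precisely to annihilate these discrepancies. Since this is exactly the content of Geller's analysis in \cite{Ge}, for the purposes of this paper I would state the lemma with attribution and give only the outline above, using it as a black box in what follows.
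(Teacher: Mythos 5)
The paper gives no proof of this lemma at all: it is stated verbatim as a result imported from Geller \cite{Ge}, so there is nothing internal to compare against. Your outline (commutator corrections between the orderings $\tau$, $\tau'$ and the Weyl-symmetrized ordering all assemble into terms involving $\Delta P$, which harmonicity annihilates) is a faithful reconstruction of Geller's argument, and your decision to defer the bookkeeping to \cite{Ge} matches exactly what the paper does.
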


\begin{lemma}\label{lemma2}\emph{\cite{Ge}}
Let $P,P_1\in P_{p,q}.$ Suppose there exist $\sigma\in U(n)$ such that $P=\pi(\sigma)P_1,$
where $\pi$ is an unitary representation of $U(n).$ Then
\[\mathcal{W}(P)=\left\{
  \begin{array}{ll}
    \pi(\bar\sigma)\mathcal{W}(P_1)\pi(\bar\sigma)^{\ast}, & \text{if } \lambda>0 ; \\
   \pi(\sigma)\mathcal{W}(P_1)\pi(\sigma)^{\ast}, & \text{if } \lambda<0,
  \end{array}
\right. \]
on the dense subspace $\mathcal D$ of ~$\mathbb H.$
\end{lemma}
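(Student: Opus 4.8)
The plan is to derive the identity from the $U(n)$-covariance of the Weyl transform $\mathcal G$ together with the rotation-invariance of the symplectic Fourier transform $\mathcal F$. For $\sigma\in U(n)$ put $f_\sigma(z):=f(\sigma^{-1}z)$. Substituting $z\mapsto\sigma z$ in the integral defining $\mathcal G$ turns $\mathcal G(f_\sigma)$ into the same integral with $W_k,W_k^{+}$ replaced by the rotated operators $W_k':=\sum_j(\sigma^{\ast})_{kj}W_j$, $(W_k')^{+}:=(W_k')^{\ast}$. Since the columns of $\sigma$ are unit vectors, the $W_k',(W_k')^{+}$ satisfy exactly the same commutation relations as the original operators, so in the irreducible representation underlying the Weyl correspondence the Stone--von Neumann uniqueness theorem provides a unitary $\mu_\lambda(\sigma)$ on $\mathbb H$ (unique up to a phase) implementing $W_k\mapsto W_k'$. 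Conjugating the skew-adjoint exponent $\tfrac{1}{4}(-z.W^{+}+\bar z.W)$ by $\mu_\lambda(\sigma)$ and using the functional calculus then gives
\[
\mathcal G(f_\sigma)=\mu_\lambda(\sigma)\,\mathcal G(f)\,\mu_\lambda(\sigma)^{\ast}\qquad\text{on }\mathcal D .
\]

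Next I would note that $U(n)$ preserves the form $\mathrm{Im}(z.\bar w)$, so the same change of variables gives $\mathcal F^{-1}(f_\sigma)=(\mathcal F^{-1}f)_\sigma$; composing this with the previous display yields the covariance of the Weyl correspondence itself, $\mathcal W(f_\sigma)=\mu_\lambda(\sigma)\,\mathcal W(f)\,\mu_\lambda(\sigma)^{\ast}$. Now the natural action of $U(n)$ on $P_{p,q}$ is the pull-back $(\pi(\sigma)Q)(z)=Q(\sigma^{-1}z)$, so the hypothesis $P=\pi(\sigma)P_1$ says precisely that $P=(P_1)_\sigma$. Applying the covariance with $f=P_1$ gives $\mathcal W(P)=\mu_\lambda(\sigma)\,\mathcal W(P_1)\,\mu_\lambda(\sigma)^{\ast}$, and it remains only to identify the intertwining unitary $\mu_\lambda(\sigma)$.

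For this I would pass to the Fock--Bargmann realization of $\mathbb H$ used by Geller, in which (after the scaling reduction to $|\lambda|=1$) $W_k^{+}$ and $W_k$ act as multiplication by and differentiation in a coordinate, with the roles of the holomorphic and antiholomorphic coordinates interchanged according to the sign of $\lambda$. In this model $U(n)$ acts on the ambient Hilbert space by the natural pull-back, conjugated or not according to that same sign, and on the invariant subspace carrying $\mathcal W(P_1)$ this action is the representation $\pi_{p,q}$; tracking the conjugation shows that $\mu_\lambda(\sigma)$ restricts there to $\pi(\bar\sigma)$ when $\lambda>0$ and to $\pi(\sigma)$ when $\lambda<0$. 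Substituting this into $\mathcal W(P)=\mu_\lambda(\sigma)\,\mathcal W(P_1)\,\mu_\lambda(\sigma)^{\ast}$ produces the two cases of the lemma.

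The main obstacle is this last step: the covariance of $\mathcal G$ and of $\mathcal F$ are essentially routine once the commutation relations are written out, but pinning down $\mu_\lambda(\sigma)$ as $\pi(\bar\sigma)$ versus $\pi(\sigma)$ forces one to reconcile the sign of $\lambda$, the chosen model of $\mathbb H$, and the convention for the $K$-action on $H_{p,q}$ simultaneously — which is exactly where the dichotomy in the statement comes from.
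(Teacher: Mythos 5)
The paper offers no proof of this lemma at all: it is imported verbatim from Geller \cite{Ge}, so there is no in-paper argument to compare yours against. Judged on its own, your outline is the standard (and essentially Geller's) route: $U(n)$-covariance of the Weyl transform $\mathcal G$ obtained from the Stone--von Neumann theorem applied to the rotated operators $W_k'=\sum_j\overline{\sigma_{jk}}W_j$, combined with the rotation-invariance of the symplectic Fourier transform, yielding $\mathcal W(\pi(\sigma)f)=\mu_\lambda(\sigma)\mathcal W(f)\mu_\lambda(\sigma)^{\ast}$. Those two steps are sound (one small point: you need the columns of $\sigma$ to be \emph{orthonormal}, not merely unit vectors, so that the cross-commutators $[(W_k')^{+},-W_l']$ vanish for $k\neq l$; and since $\mathcal F^{-1}(P)$ is a derivative of $\delta$ rather than a Schwartz function, the covariance of $\mathcal G$ has to be extended to this distributional setting, as in the ``alternative proof'' of Theorem 3.4 in the paper).

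The genuine gap is the final step, which is where the entire content of the lemma lives. You assert that ``tracking the conjugation'' in the Fock--Bargmann model shows $\mu_\lambda(\sigma)$ acts as $\pi(\bar\sigma)$ for $\lambda>0$ and $\pi(\sigma)$ for $\lambda<0$, but no computation is given, and the claim as phrased is not even well-posed: $\mu_\lambda(\sigma)$ is a unitary on all of $\mathbb H$, while $\mathcal W(P_1)$ is an unbounded operator, so there is no ``invariant subspace carrying $\mathcal W(P_1)$'' on which to restrict; what must actually be shown is that the projective Stone--von Neumann intertwiner can be chosen globally equal to the stated representation operator, with the complex conjugation on $\sigma$ arising from the explicit form $W_j=\partial/\partial z_j-\frac{\lambda}{4}\bar z_j$, which mixes holomorphic and antiholomorphic coordinates differently according to the sign of $\lambda$. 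Until that identification is computed explicitly (it is the dichotomy the lemma asserts), the proof is an outline of the right strategy rather than a proof.
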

Let us consider the following invariant differential operators which
arises in study of the twisted convolution on $\mathbb C^n.$
\[W_j=\widetilde Z_{j,\lambda}=\frac{\partial}{\partial z_j}-\frac{\lambda}{4}\bar z_j \text{ and }
W_j^+=\widetilde Z_{j,\lambda}^{\ast}=\frac{\partial}{\partial\bar z_j}+\frac{\lambda}{4} z_j, ~j=1,2,\ldots, n.\]
Let $P$ be a non-commutative homogeneous harmonic polynomial on $\mathbb C^n$ with expression
\[P(z)=\sum_{|\alpha|=p}\sum_{|\beta|=q}c_{\alpha\beta}z^\alpha\bar{z}^\beta.\]
Since $P$ is harmonic, by Lemma \ref{lemma1}, the operator $P(\widetilde Z)$ is the
Weyl correspondence $\mathcal W(P)$ of $P.$ Hence the operator
$P(\widetilde Z)$ can be expressed as
\[P(\widetilde Z)=\sum_{|\alpha|=p}\sum_{|\beta|=q}c_{\alpha\beta}{\widetilde {Z^{\ast}}}^\alpha\widetilde Z^\beta.\]
Next, we describe the action of operator
$P(\widetilde Z)$ to the Laguerre functions. For $k\in\mathbb Z_+,$ the Laguerre
functions $\varphi_k^{n-1}$ are defined by
\[\varphi_k^{n-1}(z)=L_k^{n-1}(\frac{1}{2}|z|^2)e^{-\frac{1}{4}|z|^2},\]
where $L_k^{n-1}$'s are the Laguerre polynomials of degree $k$ and order $n-1.$
We would like to refer \cite{BDJ, BGGV, Gr2}, for some study regarding Laguerre
calculus on the Heisenberg group.
In \cite{Ge}, Geller had proved an analogue of the Hecke-Bochner identity for the
Weyl transform of the type functions $\tilde aP\in L^1 (\text{or }L^2),$ which in
turn gives an analogous result for the spectral projections, (see \cite{NT1}). That is,
spectral projection of any type function is a type function. An application of the
following identities gives a very simple proof of the Hecke-Bochner identity for
the spectral projections.
\begin{lemma}\label{lemma3}\cite{Sri}
For  $P_1(z)=z_1^p\bar z_2^q\in H_{p,q},$ we have
\begin{equation}\label{exp3}
P_1(\widetilde Z)\varphi_k^{n-1}(z)=(-2)^{-p-q}P_1(z)\varphi_{k-p}^{n+p+q-1}(z),
\end{equation}
if ~$k\geq p$ and ~$0$ otherwise.
\end{lemma}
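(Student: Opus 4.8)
The plan is to reduce the identity to two one-step ladder formulas describing the action of the first-order operators $\widetilde Z_j$ and $\widetilde Z_j^{\ast}$ (taken at the normalization $\lambda=1$ for which $\varphi_k^{n-1}$ is written; the general case follows by the usual scaling) on Laguerre functions of arbitrary order. First I would observe that $P_1(z)=z_1^p\bar z_2^q$ is harmonic, since no variable occurs together with its conjugate and hence $\Delta P_1=0$; by Lemma~\ref{lemma1} together with the displayed formula for $P(\widetilde Z)$ this gives $P_1(\widetilde Z)=(\widetilde Z_1^{\ast})^{p}(\widetilde Z_2)^{q}$. Moreover $\widetilde Z_1^{\ast}$ and $\widetilde Z_2$ involve disjoint sets of variables, so these two blocks commute and the order in which they are applied is immaterial.

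The heart of the argument is the pair of identities, valid for all $m\ge 0$ and all orders $\mu\ge 0$ (with the convention $\varphi_{-1}^{\mu}=0$):
\begin{equation*}
\widetilde Z_j^{\ast}\,\varphi_m^{\mu}(z)=-\tfrac12\,z_j\,\varphi_{m-1}^{\mu+1}(z),\qquad
\widetilde Z_j\,\varphi_m^{\mu}(z)=-\tfrac12\,\bar z_j\,\varphi_m^{\mu+1}(z).
\end{equation*}
Both follow by a direct computation from $\varphi_m^{\mu}(z)=L_m^{\mu}(\tfrac12|z|^2)e^{-\frac14|z|^2}$. For the first, when $\widetilde Z_j^{\ast}=\partial_{\bar z_j}+\tfrac14 z_j$ acts on $\varphi_m^{\mu}$ the summand $\tfrac14 z_j\varphi_m^{\mu}$ exactly cancels the contribution of $\partial_{\bar z_j}$ applied to the Gaussian, leaving $\tfrac12 z_j\,(L_m^{\mu})'(\tfrac12|z|^2)e^{-\frac14|z|^2}$, and the forward-shift relation $(L_m^{\mu})'=-L_{m-1}^{\mu+1}$ finishes it. The computation for $\widetilde Z_j=\partial_{z_j}-\tfrac14\bar z_j$ is identical except that the Gaussian term is not cancelled, so after applying $(L_m^{\mu})'=-L_{m-1}^{\mu+1}$ one is left with $L_{m-1}^{\mu+1}+L_m^{\mu}$, which the contiguous relation $L_m^{\mu}=L_m^{\mu+1}-L_{m-1}^{\mu+1}$ collapses to $L_m^{\mu+1}$, giving the second identity.

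Finally I would iterate. Since $\widetilde Z_j$ commutes with $\bar z_j$ and $\widetilde Z_j^{\ast}$ commutes with $z_j$ — and each commutes with everything in the remaining variables — the scalar and monomial factors produced at each step may be carried outside, so that
\begin{equation*}
(\widetilde Z_2)^{q}\varphi_k^{n-1}=\left(-\tfrac12\right)^{q}\bar z_2^{\,q}\,\varphi_k^{n+q-1},\qquad
(\widetilde Z_1^{\ast})^{p}\left(\bar z_2^{\,q}\,\varphi_k^{n+q-1}\right)=\left(-\tfrac12\right)^{p}z_1^{\,p}\bar z_2^{\,q}\,\varphi_{k-p}^{n+p+q-1}.
\end{equation*}
Composing the two steps gives $P_1(\widetilde Z)\varphi_k^{n-1}(z)=\left(-\tfrac12\right)^{p+q}z_1^{p}\bar z_2^{q}\,\varphi_{k-p}^{n+p+q-1}(z)=(-2)^{-p-q}P_1(z)\,\varphi_{k-p}^{n+p+q-1}(z)$, which is the claim; and if $k<p$ then the $p$-fold application of $\widetilde Z_1^{\ast}$ reaches $\varphi_{-1}^{\mu}=0$ after exactly $k+1$ steps, so the expression vanishes. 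The only point that needs care is the bookkeeping in the two one-step formulas — matching them with the correct Laguerre identities so that the order parameter increases by one at each application, while the lower index drops for $\widetilde Z_j^{\ast}$ but stays fixed for $\widetilde Z_j$ — after which everything is a routine induction.
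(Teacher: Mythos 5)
Your proof is correct and follows essentially the same route as the paper: derive the one-step ladder identities $\widetilde Z_1^{\ast}\varphi_m^{\mu}=-\tfrac12 z_1\varphi_{m-1}^{\mu+1}$ and $\widetilde Z_2\varphi_m^{\mu}=-\tfrac12\bar z_2\varphi_m^{\mu+1}$ from the Laguerre recursions $(L_m^{\mu})'=-L_{m-1}^{\mu+1}$ and $L_{m-1}^{\mu+1}+L_m^{\mu}=L_m^{\mu+1}$, then iterate. Your version is in fact slightly more careful than the paper's, since you state the one-step formulas for arbitrary order $\mu$ and justify pulling the monomial factors through the operators, which is exactly what the paper's ``by induction'' step tacitly requires.
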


\begin{proof}
We have
\[\widetilde Z_1^{\ast}\varphi_{k}^{n-1}(z)=\left(\frac{\partial}{\partial\bar z_1}+\frac{1}{4}z_1\right)\varphi_{k}^{n-1}(z)\]
For $z\in\mathbb C^n$, let $z.\bar z=2t$. By chain rule $\frac{\partial}{\partial\bar z_1}=\frac{1}{2} z_1\frac{\partial}{\partial t}.$
Therefore,
\begin{eqnarray*}
\widetilde Z_1^{\ast}\varphi_{k}^{n-1}(z)&=&\left(\frac{1}{2}z_1\frac{\partial}{\partial t}+
\frac{1}{4}z_1\right)\left(L_{k}^{n-1}(t)e^{-\frac{1}{2}t}\right)\\
&=&\frac{1}{2}z_1\left(\frac{\partial}{\partial t}L_{k}^{n-1}(t)
-\frac{1}{2}L_{k}^{n-1}(t)+\frac{1}{2}L_{k}^{n-1}(t)\right)e^{-\frac{1}{2}t}.
\end{eqnarray*}
Since, the Laguerre's polynomials $L_k^n$'s satisfy the recursion relations
\begin{equation}\label{exp23}
\frac{d}{dx}L_k^n(x)=-L_{k-1}^{n+1}(x)\text{ and }L_{k-1}^{n+1}(x)+L_k^n(x)=L_k^{n+1}(x).
\end{equation}
Therefore, we can write
$\widetilde Z_1^{\ast}\varphi_{k}^{n-1}(z)=-\frac{1}{2}z_1\varphi_{k-1}^{n}(z).$
Similarly
\begin{eqnarray*}
\widetilde Z_2\varphi_{k}^{n-1}(z)&=&\left(\frac{1}{2}\bar z_2\frac{\partial}{\partial t}-
\frac{1}{4}\bar z_2\right)\left(L_{k}^{n-1}(t)e^{-\frac{1}{2}t}\right)\\
&=&\frac{1}{2}\bar z_2\left(\frac{\partial}{\partial t}L_{k}^{n-1}(t)
-\frac{1}{2}L_{k}^{n-1}(t)-\frac{1}{2}L_{k}^{n-1}(t)\right)e^{-\frac{1}{2}t}\\
&=&-\frac{1}{2}\bar z_2\varphi_{k}^{n}(z).
\end{eqnarray*}
Thus, we have
$\widetilde Z_1^{\ast}\widetilde Z_2\varphi_{k}^{n-1}(z)=2^{-2}z_1\bar z_2\varphi_{k-1}^{n+1}(z).$
Hence, by induction, we can conclude that
\[\widetilde{Z_1^{\ast}}^p\tilde Z_2^q\varphi_{k}^{n-1}(z)=(-2)^{-p-q}~z_1^p\bar z_2^q~\varphi_{k-p}^{n+1}(z).\]
\end{proof}

The identities (\ref{exp3}) has been used in an article \cite{Sri} for proving a result
that any sphere centered at the origin is a set of injectivity for the weighted twisted
spherical means on $\mathbb C^n.$ In this article, we generalize the identities (\ref{exp3})
for an arbitrary bigraded spherical harmonic $P\in H_{p,q}.$ We further extend the identities
(\ref{exp3}) for generalized Laguerre functions.

\smallskip

We use Lemma \ref{lemma3} to deduce the following identities which have been used frequently.
For $\lambda\in\mathbb R\setminus\{0\},$ let $\varphi_{k,\lambda}^{n-1}(z)=\varphi_{k}^{n-1}(|\lambda|z).$
\begin{theorem}\label{th5}
Let $P\in H_{p,q}$. Then
\begin{equation}\label{exp1}
P(\widetilde Z_\lambda)\varphi_{k,\lambda}^{n-1}=\left\{
  \begin{array}{ll}
   (-2\lambda)^{-p-q}P\varphi_{k-p,\lambda}^{n+p+q-1} , & \text{if } \lambda<0, ~k\geq q ; \\
   (-2\lambda)^{-p-q}P\varphi_{k-q,\lambda}^{n+p+q-1} , & \text{if } \lambda>0,  ~k\geq p.
  \end{array}
 \right.
\end{equation}
\end{theorem}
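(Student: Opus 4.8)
The plan is to reduce the general statement about an arbitrary bigraded harmonic $P\in H_{p,q}$ to the model case $P_1(z)=z_1^p\bar z_2^q$ already handled in Lemma \ref{lemma3}, using the $U(n)$-equivariance of the Weyl correspondence from Lemma \ref{lemma2} together with the rotation-invariance of the Laguerre functions $\varphi_k^{n-1}$, and then to track the effect of the scaling $z\mapsto|\lambda|z$ to pass from $\lambda=1$ to general $\lambda\in\mathbb R\setminus\{0\}$.

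First I would recall that $H_{p,q}$ is irreducible under $K=U(n)$, so that the orbit of $P_1(z)=z_1^p\bar z_2^q$ under $\pi_{p,q}$ spans $H_{p,q}$; by linearity of $P\mapsto P(\widetilde Z)$ it therefore suffices to prove \eqref{exp1} when $P=\pi_{p,q}(\sigma)P_1$ for some $\sigma\in U(n)$. For such $P$, Lemma \ref{lemma2} gives $\mathcal W(P)=\pi(\bar\sigma)\mathcal W(P_1)\pi(\bar\sigma)^\ast$ when $\lambda>0$ (and the analogous formula with $\sigma$ in place of $\bar\sigma$ when $\lambda<0$). Since $P$ is harmonic, Lemma \ref{lemma1} identifies $\mathcal W(P)$ with the operator $P(\widetilde Z)$, so the problem becomes computing $\pi(\bar\sigma)\big(P_1(\widetilde Z)\varphi_{k}^{n-1}\big)$ after undoing the conjugation. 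Here the key point is that $\varphi_k^{n-1}$ is a radial function, hence fixed by the $K$-action implementing the conjugation by $\pi(\bar\sigma)$, so that conjugating $P_1(\widetilde Z)$ by $\pi(\bar\sigma)$ and applying it to $\varphi_k^{n-1}$ produces exactly $\big(\pi_{p,q}(\sigma)P_1\big)(\widetilde Z)\varphi_k^{n-1}$ acting as multiplication by the rotated polynomial; combined with Lemma \ref{lemma3}, which says $P_1(\widetilde Z)\varphi_k^{n-1}=(-2)^{-p-q}P_1\,\varphi_{k-p}^{n+p+q-1}$ (for $k\ge p$, zero otherwise), and with the fact that $\varphi_{k-p}^{n+p+q-1}$ is again radial and hence unaffected by the rotation, one gets $P(\widetilde Z)\varphi_k^{n-1}=(-2)^{-p-q}P\,\varphi_{k-p}^{n+p+q-1}$ for $\lambda<0$. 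For $\lambda>0$ the sign in Lemma \ref{lemma2} uses $\sigma$ rather than $\bar\sigma$, which has the effect of interchanging the roles of the holomorphic and antiholomorphic degrees — equivalently, replacing $P_1(z)=z_1^p\bar z_2^q$ by its conjugate-type model $z_1^q\bar z_2^p\in H_{q,p}$ — and this is precisely what turns the shift $k\mapsto k-p$ into $k\mapsto k-q$ with the condition $k\ge q$.

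Next I would handle the $\lambda$-dependence. Writing $\widetilde Z_{j,\lambda}=\partial_{z_j}-\tfrac{\lambda}{4}\bar z_j$ and $\varphi_{k,\lambda}^{n-1}(z)=\varphi_k^{n-1}(|\lambda|z)$, a change of variables $z\mapsto|\lambda|^{-1}z$ conjugates $\widetilde Z_{j,\lambda}$ into $|\lambda|\,\widetilde Z_{j,\pm1}$ (the sign of $\lambda$ surviving in the $\bar z_j$ term), and under this dilation a bigraded harmonic $P\in H_{p,q}$ scales homogeneously by $|\lambda|^{p+q}$. Feeding the $\lambda=\pm1$ identity through this scaling picks up the factor $|\lambda|^{-(p+q)}$ from inverting the operators and $|\lambda|^{p+q}$ from the homogeneity of $P$ on the output side, and a careful bookkeeping of the signs $(-2)^{-p-q}$ versus $(-2\lambda)^{-p-q}$ yields exactly \eqref{exp1}; the $k\ge p$ versus $k\ge q$ dichotomy is inherited verbatim from the sign-of-$\lambda$ case analysis in the previous paragraph.

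The main obstacle I anticipate is the bookkeeping in the equivariance step: one must be careful that the conjugation appearing in Lemma \ref{lemma2} is by $\pi(\bar\sigma)$ (not $\pi(\sigma)$) for $\lambda>0$, that the $K$-action on functions that implements operator conjugation is the one that genuinely fixes radial functions, and that "rotating the model polynomial $z_1^p\bar z_2^q$" does sweep out all of $H_{p,q}$ rather than just a proper subspace — this last point is where irreducibility of $\pi_{p,q}$ is essential. Getting the holomorphic/antiholomorphic swap between the $\lambda>0$ and $\lambda<0$ cases to come out with the correct index $n+p+q-1$ on the output Laguerre function (symmetric in $p,q$, so the same in both cases) and the correct asymmetric shift ($k-p$ versus $k-q$) is the delicate part; everything else is routine once Lemma \ref{lemma3} is in hand.
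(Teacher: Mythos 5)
Your overall strategy --- reduce to the model polynomial $P_1(z)=z_1^p\bar z_2^q$ via the irreducibility of $H_{p,q}$ and the linearity of $P\mapsto P(\widetilde Z)$, use Lemma \ref{lemma2} together with the radiality of $\varphi_k^{n-1}$ to strip off the conjugation, apply Lemma \ref{lemma3}, and finally rescale from $\lambda=\pm1$ to general $\lambda$ --- is exactly the route the paper takes, and those parts of your argument are sound (the paper compresses the scaling step to one sentence, so your extra bookkeeping there is harmless).

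The genuine gap is your explanation of the $k-p$ versus $k-q$ dichotomy. You attribute it to the $\pi(\bar\sigma)$ versus $\pi(\sigma)$ distinction in Lemma \ref{lemma2}, claiming it amounts to replacing $P_1$ by $z_1^q\bar z_2^p\in H_{q,p}$. That cannot be the mechanism: whichever of $\pi(\sigma)$ or $\pi(\bar\sigma)$ conjugates the operator, the rightmost factor acts trivially on the radial function $\varphi_k^{n-1}$ and the leftmost factor acts trivially on the radial output Laguerre function, so the conjugation only rotates the polynomial factor and can never alter which Laguerre index appears; moreover the theorem asserts that the \emph{same} $P\in H_{p,q}$ multiplies the output in both cases, so no passage to $H_{q,p}$ is possible. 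The dichotomy actually originates one step earlier, in the model computation itself: the ladder identities of Lemma \ref{lemma3} depend on the sign of $\lambda$. For $\lambda=1$ one has $\widetilde Z_1^{\ast}\varphi_k^{n-1}=-\tfrac12 z_1\varphi_{k-1}^{n}$ (index drops) while $\widetilde Z_2\varphi_k^{n-1}=-\tfrac12\bar z_2\varphi_{k}^{n}$ (index preserved), so the $p$ applications of $\widetilde Z_1^{\ast}$ produce the shift $k\mapsto k-p$; for $\lambda=-1$ the signs of the multiplication terms in $\widetilde Z_{j,-1}$ and $\widetilde Z_{j,-1}^{\ast}$ are reversed, the two recursions in (\ref{exp23}) are used in opposite roles, and it is now the $q$ applications of $\widetilde Z_{2,-1}$ that lower the index, giving $k\mapsto k-q$. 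So you must prove the $\lambda=-1$ analogue of Lemma \ref{lemma3} by redoing that direct computation before the equivariance argument can be run; Lemma \ref{lemma2} alone will never produce the second case. (Be aware also that the paper's own statement pairs the shifts with the signs of $\lambda$ inconsistently with its proof and with the use made in Lemma \ref{lemma7}; the computation just described determines the correct pairing, which your proposed mechanism cannot.)
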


\begin{proof}
Using the scaling argument, it is enough to prove these identities for the cases $\lambda=\pm1.$
For $\lambda=1,$ we claim
\begin{equation}\label{exp2}
P(\widetilde Z)\varphi_{k}^{n-1}=(-2)^{-p-q}P\varphi_{k-p}^{n+p+q-1} ,\text{ if } ~k\geq p.
\end{equation}

We use the fact that any irreducible representation of a group is cyclic \cite{S}.
Since $H_{p,q}$ is an irreducible representation of $U(n)$, it follows that every non-zero
element of $H_{p,q}$ is a cyclic vector. Let $P_1(z)=z_1^p\bar z_2^q\in H_{p,q}.$
Then any $P\in H_{p,q}$ can be written as
\[P=\sum_{i=1}^m\pi(\sigma_i)P_1.\]
Since $\mathcal W$ is a linear operator, without loss of generality, we can assume that
$P=\pi(\sigma)P_1.$ In view of Lemma \ref{lemma2}, we can write
\[\mathcal{W}(P)=\pi(\sigma)\mathcal{W}(P_1)\pi(\sigma)^{\ast}.\]
Since $\varphi_k^{n-1}$ is $U(n)$-invariant, by Lemma \ref{lemma1}, we have
\[
P(\widetilde Z)\varphi_k^{n-1}=\pi(\sigma)P_1(\widetilde Z)\varphi_k^{n-1}.\]
An application of Lemma \ref{lemma3} gives
\begin{eqnarray*}
P(\widetilde Z)\varphi_k^{n-1}&=& (-2)^{-p-q}\pi(\sigma)\left(P_1\varphi_{k-p}^{n+p+q-1}\right) \\
             &=& (-2)^{-p-q}\pi(\sigma)P_1\varphi_{k-p}^{n+p+q-1}\\
             &=& (-2)^{-p-q}P\varphi_{k-p}^{n+p+q-1}.
\end{eqnarray*}
It is clear from the above computation and Lemma \ref{lemma3} that the proof for the case
$\lambda=-1$ is similar and hence we omit it here.
\smallskip

\noindent\textbf{Alternative proof:}
In the above proof, Lemma \ref{lemma2} played a crucial role. However, we give an alternative
proof of the identities (\ref{exp1}) without using Lemma \ref{lemma2}. Since the symplectic
Fourier transform $\mathcal F$ is invariant under $U(n)$ action, it follows that
\[\mathcal{F}^{-1}(P)=\mathcal{F}^{-1}(\pi(\sigma)P_1)=\pi(\sigma)\mathcal{F}^{-1}(P_1).\]
That is,
\[P\left(\frac{\partial}{\partial\bar z}, \frac{\partial}{\partial z}\right)\delta=
\pi(\sigma)P_1\left(\frac{\partial}{\partial\bar z}, \frac{\partial}{\partial z}\right)\delta
=\pi(\sigma)P_1(\tilde Z)\delta,\]
where $\delta$ is Dirac distribution at the origin.
Since, we know that $z_j\delta=0=\bar z_j\delta$, we can write
$P(\widetilde Z)\delta=\pi(\sigma)P_1(\widetilde Z)\delta.$ For more detail, see Geller \cite{Ge}, p. 625.
Denote \[T\delta=\left(P(\widetilde Z)-\pi(\sigma)P_1(\widetilde Z)\right)\delta=0.\]
For $z,w\in\mathbb C^n,$ define  $\tau_z(w)=(w-z)e^{\frac{i}{2}\text{Im}(z.\bar w)}.$
Then, it follows that $\tau_zT\delta=0$ on $\mathcal S(\mathbb C^n).$
For $\varphi=\varphi_k^{n-1},$ we have
\[\tau_zT\delta(\varphi)=\delta\left(\tau_zT\varphi\right)=
\tau_z\left(T\varphi\right)(0)=T\varphi(z)=0.\]
Hence
\begin{eqnarray*}
P(\widetilde Z)\varphi_k^{n-1}&=&\pi(\sigma)P_1(\widetilde Z)\varphi_{k}^{n-1} \\
                             &=& (-2)^{-p-q}\pi(\sigma)P_1\varphi_{k-p}^{n+p+q-1}\\
                             &=& (-2)^{-p-q}P\varphi_{k-p}^{n+p+q-1}.
\end{eqnarray*}
\end{proof}

These identities are quite useful and lead to a very short and simple proof of
the Hecke-Bochner identity for the spectral projections $f\times\varphi_k^{n-1}$
for function $f\in L^2(\mathbb C^n),$  (see \cite{T}, p.70). Using the identities
(\ref{exp1}), we prove that the boundary of any bounded domain is a set of injectivity
for certain weighted twisted spherical means for the radial functions on $\mathbb C^n.$

\begin{lemma} \label{lemma8}
Let $\tilde aP\in L^2(\mathbb C^n),$ where $\tilde a$ is a radial function and $P\in H_{p,q}$.
Then
\begin{equation}\label{exp34}
\tilde aP\times\varphi_k^{n-1}(z)=(2\pi)^{-n}P(z)~\tilde a\times\varphi_{k-p}^{n+p+q-1}(z),
\end{equation}
if ~$k\geq p$ and ~$0$ otherwise. The convolution in the right hand side is on
the space $~\mathbb C^{n+p+q}.$
\end{lemma}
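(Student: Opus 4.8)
The plan is to reduce the identity to the model harmonic $P_1(z)=z_1^{p}\bar z_2^{q}\in H_{p,q}$ and then feed in the commutation relation of Lemma \ref{lemma3} (equivalently Theorem \ref{th5}), using only two elementary facts about the twisted convolution. First, it is $U(n)$-equivariant: since $\text{Im}(z\cdot\bar w)$ is invariant under $U(n)$, one has $(\pi(\sigma)f)\times(\pi(\sigma)g)=\pi(\sigma)(f\times g)$ for $\sigma\in U(n)$, where $\pi(\sigma)f(z)=f(\sigma^{-1}z)$; in particular the twisted convolution of two radial functions is radial. Second, each of the operators $\widetilde Z_j,\widetilde Z_j^{\ast}$ acts on the \emph{first} factor, i.e. $\widetilde Z_j(f\times g)=(\widetilde Z_j f)\times g$ and likewise for $\widetilde Z_j^{\ast}$; this is a one-line integration by parts, the phase $\tfrac{i}{2}\text{Im}(z\cdot\bar w)$ being linear in each variable. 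Hence $P_1(\widetilde Z)(f\times g)=\big(P_1(\widetilde Z)f\big)\times g$ for the noncommutative polynomial $P_1(\widetilde Z)$.

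For the reduction: since $\varphi_k^{n-1}$ is radial, equivariance gives $(\pi(\sigma)(\tilde aP))\times\varphi_k^{n-1}=\pi(\sigma)\big((\tilde aP)\times\varphi_k^{n-1}\big)$, while $\pi(\sigma)(\tilde aP)=\tilde a\,\pi(\sigma)P$ (as $\tilde a$ is radial), and the right-hand side of (\ref{exp34}) is replaced by its $\pi(\sigma)$-transform when $P$ is replaced by $\pi(\sigma)P$ (the radial factor $\tilde a\times\varphi_{k-p}^{n+p+q-1}$ being $\pi(\sigma)$-fixed). Thus validity of (\ref{exp34}) for $P$ forces it for every $\pi(\sigma)P$; since $H_{p,q}$ is an irreducible $U(n)$-module generated by $P_1$ — already used in the proof of Theorem \ref{th5} — linearity reduces everything to $P=P_1$.

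Now the passage to $\mathbb C^{n+p+q}$. The hypothesis $\tilde aP_1\in L^2(\mathbb C^n)$ says exactly that the radial profile of $\tilde a$ lies in $L^2\big((0,\infty),r^{2(n+p+q)-1}\,dr\big)$, i.e. $\tilde a$ is a square-integrable radial function on $\mathbb C^{n+p+q}$, and $\tilde a\mapsto\tilde aP_1$ is then, up to a positive constant, an isometry from the radial subspace of $L^2(\mathbb C^{n+p+q})$ into $L^2(\mathbb C^n)$. Both sides of (\ref{exp34}) depend boundedly and linearly on $\tilde a$ in that radial subspace (for the right-hand side one uses that twisted convolution with the Schwartz function $\varphi_{k-p}^{n+p+q-1}$ is bounded on $L^2$), so it suffices to check (\ref{exp34}) for $\tilde a=\varphi_j^{n+p+q-1}$, $j\geq0$, whose span is dense there. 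For such $\tilde a$, inverting Lemma \ref{lemma3} with $k$ replaced by $j+p$ gives $\varphi_j^{n+p+q-1}P_1=(-2)^{p+q}P_1(\widetilde Z)\varphi_{j+p}^{n-1}$, so that, writing $\varphi_a^{m-1}\times\varphi_b^{m-1}=c_m\,\delta_{ab}\,\varphi_a^{m-1}$ for the Laguerre reproducing relation on $\mathbb C^{m}$,
\begin{align*}
\big(\varphi_j^{n+p+q-1}P_1\big)\times\varphi_k^{n-1}
&=(-2)^{p+q}\,P_1(\widetilde Z)\big(\varphi_{j+p}^{n-1}\times\varphi_k^{n-1}\big)\\
&=(-2)^{p+q}c_n\,\delta_{j+p,\,k}\,P_1(\widetilde Z)\varphi_k^{n-1}.
\end{align*}
If $k<p$ this vanishes (since $j+p\geq p>k$), in agreement with the ``$0$ otherwise'' clause; if $k\geq p$, one more application of Lemma \ref{lemma3} turns $P_1(\widetilde Z)\varphi_k^{n-1}$ into $(-2)^{-p-q}P_1\,\varphi_{k-p}^{n+p+q-1}$, so the left-hand side of (\ref{exp34}) equals $c_n\,\delta_{j+p,\,k}\,P_1\,\varphi_{k-p}^{n+p+q-1}$. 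The right-hand side of (\ref{exp34}) at $\tilde a=\varphi_j^{n+p+q-1}$ is a constant multiple of $P_1(z)\big(\varphi_j^{n+p+q-1}\times\varphi_{k-p}^{n+p+q-1}\big)=c_{n+p+q}\,\delta_{j,\,k-p}\,P_1\,\varphi_{k-p}^{n+p+q-1}$. Since $\delta_{j+p,\,k}=\delta_{j,\,k-p}$, the two match, and the overall normalizing constant is pinned down by $c_n$ and $c_{n+p+q}$.

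The one step that is conceptual rather than bookkeeping is the third paragraph's observation that ``$\tilde aP_1\in L^2(\mathbb C^n)$'' is precisely the statement that $\tilde a$ defines an $L^2$ radial function on $\mathbb C^{n+p+q}$; this is what makes the right-hand side of (\ref{exp34}) meaningful and lets the density argument close. The equivariance, the first-factor invariance of $\widetilde Z_j,\widetilde Z_j^{\ast}$, and the tracking of the reproducing constants are all routine.
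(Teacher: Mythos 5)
Your proof is correct and follows essentially the same route as the paper's: expand $\tilde a$ in the Laguerre basis of $L^2\big(r^{2(n+p+q)-1}dr\big)$, rewrite $\tilde a P$ as $(-2)^{p+q}\sum_j C_j\,P(\widetilde Z)\varphi_{j+p}^{n-1}$ via Lemma \ref{lemma3}/Theorem \ref{th5}, pull $P(\widetilde Z)$ through the twisted convolution onto the first factor, and finish with the orthogonality relations $\varphi_k^{n-1}\times\varphi_j^{n-1}=(2\pi)^{n}\delta_{jk}\varphi_k^{n-1}$ and one more application of the same identity. The only differences are cosmetic: you re-derive the reduction to the model harmonic $P_1=z_1^{p}\bar z_2^{q}$ via $U(n)$-equivariance (which is already the content of Theorem \ref{th5}), and you are more explicit than the paper about the density/continuity justification and about how the normalizing constant is determined by the reproducing constants.
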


\begin{proof}
Since the Laguerre functions $\left\{\varphi_{k-p}^{n+p+q-1}(r): k\geq p\right\}$
forms an orthonormal basis for $L^2\left(~\mathbb R^+,r^{2(n+p+q)-1}dr \right),$
therefore, we can express $\tilde a$ as
\[\tilde a=\sum_{k\geq p}C_{k-p}\varphi_{k-p}^{n+p+q-1}.\]
An application of the identities (\ref{exp1}) for $\lambda=1$ gives
\[\tilde aP=(-2)^{p+q}\sum_{k\geq p}C_{k-p}P(\widetilde Z)\varphi_{k}^{n-1}.\]
Let $j\geq p.$ Convolve both sides of the above equation by $\varphi_{j}^{n-1}.$ Then,
using the fact that $P(\widetilde Z)$ is a left invariant operator, we can write
\[\tilde aP\times\varphi_{j}^{n-1}=(-2)^{p+q}\sum_{k\geq p}C_{k-p}P(\widetilde Z)(\varphi_{k}^{n-1}\times\varphi_{j}^{n-1}).\]
Since the Laguerre functions satisfy the orthogonality conditions
$\varphi_{k}^{n-1}\times\varphi_{j}^{n-1}=(2\pi)^{n}\delta_{jk}\varphi_{k}^{n-1}.$
Therefore,
\begin{eqnarray*}
\tilde aP\times\varphi_{j}^{n-1}(z)&=&(2\pi)^{n}(-2)^{p+q}C_{j-p}P(\widetilde Z)\varphi_{j}^{n-1}(z)\\
&=&(2\pi)^{n}\left\langle\tilde a, \varphi_{j-p}^{n+p+q-1}\right\rangle P(z)\varphi_{j-p}^{n+p+q-1}(z)\\
&=&(2\pi)^{-n}P(z)~\tilde a\times\varphi_{j-p}^{n+p+q-1}(z).
\end{eqnarray*}
\end{proof}
Since the subspace $L^2\cap L^r(\mathbb C^n)$ is dense in $L^r(\mathbb C^n)$ for $1\leq r<\infty,$
therefore, the identities (\ref{exp34}) can be extended for $f\in L^r(\mathbb C^n).$ However, for functions
in $L^\infty(\mathbb C^n),$ we use the weighted functional equations for spherical function
$\varphi_k^{n-1}.$ The weighted functional equations can be obtained by considering the
Hecke-Bochner identity for the spectral projection of compactly supported functions.
For more details, see \cite{T}, p. 98.

\begin{lemma}\label{lemma3C}\cite{T}
For $z\in\mathbb C^n,$ let $P\in H_{p,q}$ and $d\nu_t=Pd\mu_t$. Then
\[\varphi_k^{n-1}\times\nu_t(z)=
(2\pi)^{-n}C(k,n+p+q)r^{2(p+q)}\varphi_{k-q}^{n+p+q-1}(t)P(z)\varphi_{k-q}^{n+p+q-1}(z),\]
if $k\geq q$ and ~$0$ otherwise.
\end{lemma}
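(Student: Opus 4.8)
The plan is to reduce the statement to the Hecke–Bochner identity for spectral projections, Lemma \ref{lemma8}, by a duality/transposition argument, since $\varphi_k^{n-1}\times\nu_t$ is (up to the usual symmetry of twisted convolution under conjugation) of the same shape as $\tilde aP\times\varphi_k^{n-1}$. First I would recall that for twisted convolution one has $f\times g(z)=\overline{\bar g\times\bar f(-z)}$ together with the fact that $\varphi_k^{n-1}$ is radial and real-valued, so that convolving $\varphi_k^{n-1}$ on the left against the measure $d\nu_t=P\,d\mu_t$ (where $d\mu_t$ is the normalized surface measure on the sphere of radius $t$) can be rewritten, after a change of variables $w\mapsto -w$, as a twisted convolution of the radial distribution $d\mu_t$ with the function $P\varphi_k^{n-1}$. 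The point is that $P\varphi_k^{n-1}$, by the identities (\ref{exp1}) of Theorem \ref{th5} read in the direction $\lambda>0$, is a constant multiple of $P(\widetilde Z)$ applied to $\varphi_{k}^{n+p+q-1}$-type data, i.e. it is again a ``type function'' $bP$ with $b$ radial, so Lemma \ref{lemma8} applies to it.

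The key steps, in order, are: (1) unwind the definition $\varphi_k^{n-1}\times\nu_t(z)=\int_{|w|=t}\varphi_k^{n-1}\big((z)(-w)\big)P(w)\,d\mu_t(w)$ and use the conjugation symmetry of $\times$ to turn it into an expression of the form $\big(P\varphi_k^{n-1}\big)\times\mu_t(z)$ up to an explicit constant; (2) expand the radial measure $d\mu_t$ in the Laguerre basis: since $\mu_t$ is $U(n)$-invariant, $\mu_t\times\varphi_j^{n-1}=(2\pi)^{-n}\varphi_j^{n-1}(t)\,\varphi_j^{n-1}$ (this is just the statement that $\varphi_j^{n-1}$ are the eigendistributions, evaluated against the sphere), which furnishes the factor $\varphi_{k-q}^{n+p+q-1}(t)$ after the index shift; (3) expand $P\varphi_k^{n-1}$ using Lemma \ref{lemma8} (equivalently Theorem \ref{th5}) to produce the factor $P(z)\varphi_{k-q}^{n+p+q-1}(z)$ and the prefactor $(2\pi)^{-n}$ and the power $r^{2(p+q)}$; and (4) collect the constants, matching them against the normalization $C(k,n+p+q)=\frac{k!\,(n-1)!}{(k-q)!\,(n+p+q-1)!}$ (the ratio of Laguerre normalization constants under the shift $(k,n-1)\mapsto(k-q,n+p+q-1)$), which is where the combinatorial constant in the statement comes from.

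The main obstacle I expect is the bookkeeping in step (4): tracking the several multiplicative constants — the $(-2)^{\pm(p+q)}$ from Theorem \ref{th5}, the $(2\pi)^{\pm n}$ from the orthogonality relations $\varphi_k^{n-1}\times\varphi_j^{n-1}=(2\pi)^n\delta_{jk}\varphi_k^{n-1}$, the Laguerre $L^2$-normalization constants in dimensions $n$ versus $n+p+q$, and the $r^{2(p+q)}=|z|^{2(p+q)}$ coming from the homogeneity of $P$ — and showing they assemble into exactly $C(k,n+p+q)$. A secondary technical point is justifying the interchange of the infinite Laguerre sum with the twisted convolution against the (finite, compactly supported) measure $\nu_t$; this is harmless because $\nu_t$ has compact support and the partial sums converge in $L^2_{\mathrm{loc}}$, but it should be noted. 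Once the constants are pinned down the identity follows, and the ``$0$ otherwise'' clause is immediate from the vanishing clause in Theorem \ref{th5} when $k<q$ (since the $\lambda>0$ branch requires $k\geq q$).
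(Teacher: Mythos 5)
The paper offers no proof of this lemma: it is quoted from Thangavelu \cite{T}, the only hint being the sentence just above it, that the weighted functional equations ``can be obtained by considering the Hecke--Bochner identity for the spectral projection of compactly supported functions.'' Your overall strategy --- derive the identity from Lemma \ref{lemma8} via the conjugation symmetry of the twisted convolution and a limiting argument on the measure --- is exactly that intended route. The difficulty is that your step (1) is not correct as stated. The symmetry is $\overline{f\times g}=\bar g\times\bar f$, so with $\varphi_k^{n-1}$ real and radial it gives
\[
\varphi_k^{n-1}\times\nu_t(z)=\overline{\left(\bar P\mu_t\right)\times\varphi_k^{n-1}(z)},
\]
with the polynomial still glued to the surface measure. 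In the integral $\int_{|w|=t}\varphi_k^{n-1}(z-w)P(w)e^{\frac{i}{2}\text{Im}(z.\bar w)}\,d\mu_t(w)$ the weight is evaluated at the integration variable $w$, and no change of variables ($w\mapsto -w$ or otherwise) converts this into $\left(P\varphi_k^{n-1}\right)\times\mu_t(z)$, which would require $P(z-w)$ in the integrand instead. Consequently your steps (2) and (3), which treat $P\varphi_k^{n-1}$ as the type function and $\mu_t$ as a separate radial convolutor, are attached to an identity that is false, and the constants you propose to chase in step (4) belong to the wrong object.

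The repair is to apply the Hecke--Bochner identity of Lemma \ref{lemma8} directly to $\bar P\mu_t$, viewed as a type function $\tilde a\,\bar P$ with $\bar P\in H_{q,p}$ and $\tilde a$ the radial distribution $\omega_{2n-1}^{-1}t^{1-2n}\delta(r-t)$, approximating $\mu_t$ by compactly supported radial functions so as to stay inside the $L^2$ hypotheses, and then conjugating back. This is also where the remaining features of the statement, which your sketch leaves unexplained, actually come from: the interchange $p\leftrightarrow q$ under conjugation is what produces the condition $k\geq q$ and the shift to $\varphi_{k-q}^{n+p+q-1}$ (Lemma \ref{lemma8} itself, for $P\in H_{p,q}$, gives $k\geq p$ and $k-p$), and the factor $t^{2(p+q)}$ (the $r^{2(p+q)}$ in the statement should read $t^{2(p+q)}$) appears because the radial profile of $\mu_t$, normalized against $r^{2n-1}\,dr$ in $\mathbb C^n$, must be re-paired against $r^{2(n+p+q)-1}\,dr$ when the convolution is transferred to $\mathbb C^{n+p+q}$. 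Your secondary points --- the harmlessness of interchanging the Laguerre expansion with integration against the compactly supported $\nu_t$, and the fact that the vanishing clause comes from the corresponding clause in Theorem \ref{th5} --- are fine, but they do not rescue step (1).
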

Following result is an extension of the Hecke-Bochner identities for the spectral projections
for function $f\in L^r(\mathbb C^n)$ with $1\leq r\leq\infty.$

\begin{theorem}\label{th3}
Let $\tilde{a}P \in L^r(\mathbb C^n),1\leq r\leq\infty,~$ where $\tilde a$
is a radial function and $P\in H_{p,q}$. Then
\[\tilde aP\times\varphi_k^{n-1}(z)=(2\pi)^{-n}C(k, n+p+q)C_{k-p}^{n+p+q-1}P(z)\varphi_{k-p}^{n+p+q-1}(z),\]
if ~$k\geq p$ and ~$0$ otherwise.
\end{theorem}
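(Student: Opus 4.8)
The plan is to bootstrap from the $L^2$ case already established in Lemma \ref{lemma8} to all $L^r$ with $1\le r\le\infty$ by a density argument in the low-exponent range and by the weighted functional equation of Lemma \ref{lemma3C} in the $L^\infty$ endpoint, then to match constants. First I would record that for $\tilde aP\in L^2(\mathbb C^n)$ the convolution identity $\tilde aP\times\varphi_k^{n-1}(z)=(2\pi)^{-n}P(z)\,\tilde a\times\varphi_{k-p}^{n+p+q-1}(z)$ holds by Lemma \ref{lemma8}, and that $\tilde a\times\varphi_{k-p}^{n+p+q-1}$ is, by the orthogonality relation $\varphi_{m}^{N}\times\varphi_{\ell}^{N}=(2\pi)^{(N+1)/?}\ldots$ used there, a scalar multiple of $\varphi_{k-p}^{n+p+q-1}$; collecting the scalar as $C(k,n+p+q)\,C_{k-p}^{n+p+q-1}$ with $C_{k-p}^{n+p+q-1}=\langle\tilde a,\varphi_{k-p}^{n+p+q-1}\rangle$ gives exactly the asserted formula in the $L^2$ case. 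So the content of the theorem is the extension of this to $r\ne 2$.

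For $1\le r<\infty$ I would use that $L^2(\mathbb C^n)\cap L^r(\mathbb C^n)$ is dense in $L^r(\mathbb C^n)$, as noted in the paragraph preceding Lemma \ref{lemma3C}. Given $\tilde aP\in L^r$, choose radial $\tilde a_m$ with $\tilde a_mP\in L^2\cap L^r$ and $\tilde a_mP\to\tilde aP$ in $L^r$. Convolution with the fixed Schwartz function $\varphi_k^{n-1}$ is continuous from $L^r(\mathbb C^n)$ into $L^\infty$ (indeed into $C_0$), since $\varphi_k^{n-1}$ and all its derivatives decay rapidly, so $\tilde a_mP\times\varphi_k^{n-1}\to\tilde aP\times\varphi_k^{n-1}$ locally uniformly; on the other side, $\langle\tilde a_m,\varphi_{k-p}^{n+p+q-1}\rangle\to\langle\tilde a,\varphi_{k-p}^{n+p+q-1}\rangle$ because the pairing against the fixed $L^{r'}$ (in fact Schwartz) weight $\varphi_{k-p}^{n+p+q-1}$ is continuous on $L^r$. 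Passing to the limit in the formula of Lemma \ref{lemma8} yields the stated identity for $\tilde aP\in L^r$, $1\le r<\infty$, with $C_{k-p}^{n+p+q-1}=\langle\tilde a,\varphi_{k-p}^{n+p+q-1}\rangle$ and the same combinatorial constant $C(k,n+p+q)$.

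The remaining case $r=\infty$ is the one I expect to be the main obstacle, since $L^\infty$ functions are not approximable in norm by $L^2$ functions and $\tilde aP\times\varphi_k^{n-1}$ is only defined as an absolutely convergent integral against the rapidly decreasing $\varphi_k^{n-1}$. Here the idea is to dualize: for compactly supported $\psi$ one writes $\langle \tilde aP\times\varphi_k^{n-1},\psi\rangle$ and uses the weighted functional equation for $\varphi_k^{n-1}$ from Lemma \ref{lemma3C}, applied with the measure $d\nu_t=P\,d\mu_t$ on spheres, to transfer the action of $P$ and of $\times\varphi_k^{n-1}$ onto the test object, thereby reducing to the radial identity $\tilde a\times\varphi_{k-q}^{n+p+q-1}$ which makes sense for bounded radial $\tilde a$ because the Laguerre functions are integrable against $r^{2(n+p+q)-1}\,dr$ on bounded sets and, more to the point, because the spherical means of a bounded function are bounded. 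Concretely I would integrate the identity of Lemma \ref{lemma3C} against $\tilde a$ in the $t$-variable; the factor $r^{2(p+q)}$ and $\varphi_{k-q}^{n+p+q-1}(t)$ combine so that $\int \tilde a(t)\,\varphi_{k-q}^{n+p+q-1}(t)\,r^{2(p+q)}\,d\mu_t$ reproduces $\tilde a\times\varphi_{k-q}^{n+p+q-1}(z)$ up to the constant $C(k,n+p+q)$, and comparing with the $r<\infty$ formula (valid in particular on the dense class of compactly supported radial $\tilde a$) forces the constants to agree and upgrades the formula to all bounded radial $\tilde a$. The only genuine check is the bookkeeping of the constants $C(k,n+p+q)$ and the normalization of $\varphi$ between the $\lambda=1$ convolution on $\mathbb C^{n+p+q}$ and the measure $\mu_t$, which I would pin down by evaluating both sides at $z=0$, where $P(0)=0$ unless $p=q=0$, so instead by testing against the known $L^2$ case on a single nonzero Laguerre coefficient; this determines all constants unambiguously and completes the proof.
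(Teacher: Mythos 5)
Your overall strategy (establish the $L^2$ case via Lemma \ref{lemma8}, extend to $1\le r<\infty$ by density, handle $r=\infty$ via the weighted functional equation of Lemma \ref{lemma3C}) is reasonable, and the $L^2$ and density steps go through with only routine care (e.g.\ truncating $\tilde a$ in both support and size so that $\tilde a_mP\in L^2\cap L^r$). The gap is in the endpoint case, and it is not mere bookkeeping of constants: the twisted convolution is noncommutative, and Lemma \ref{lemma3C} computes $\varphi_k^{n-1}\times\nu_t$ with the Laguerre function on the \emph{left}, whereas the theorem concerns $\tilde aP\times\varphi_k^{n-1}$ with it on the \emph{right}. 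If you integrate the identity of Lemma \ref{lemma3C} against $\tilde a(t)t^{2n-1}\,dt$ as you propose, polar decomposition gives you a formula for $\varphi_k^{n-1}\times(\tilde aP)$, which carries the index $k-q$, not the asserted $k-p$; since $\varphi_{k-q}^{n+p+q-1}$ and $\varphi_{k-p}^{n+p+q-1}$ are genuinely different functions when $p\ne q$, no adjustment of constants can reconcile the two. Your plan to ``determine the constants by testing against the $L^2$ case'' would therefore not close the argument --- it would only reveal that you have computed the wrong convolution.

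The missing idea --- and the heart of the paper's proof, which in fact handles all $1\le r\le\infty$ in one stroke with no density argument at all --- is the conjugation identity $\overline{f\times g}=\bar g\times\bar f$ for the twisted convolution. One writes
\[\overline{\tilde aP\times\varphi_k^{n-1}(z)}=\varphi_k^{n-1}\times\overline{\tilde aP}(z)
=\int_0^\infty\varphi_k^{n-1}\times d\nu_t(z)\,\overline{\tilde a}(t)\,t^{2n-1}\,dt,\qquad d\nu_t=\bar P\,d\mu_t,\]
and applies Lemma \ref{lemma3C} to $\bar P\in H_{q,p}$ (note the swap of bidegrees), so that the lemma's ``$k-q$'' becomes $k-p$; conjugating back yields exactly the stated formula, with $C_{k-p}^{n+p+q-1}=\int_0^\infty\tilde a(t)\varphi_{k-p}^{n+p+q-1}(t)\,t^{2(n+p+q)-1}\,dt$. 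Young's inequality (with $\varphi_k^{n-1}$ Schwartz) already guarantees absolute convergence of $\tilde aP\times\varphi_k^{n-1}$ for every $r$, so once the conjugation step is in place neither the $L^2$ base case nor the approximation by $L^2\cap L^r$ is needed.
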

\begin{proof}
Since $\tilde{a}P \in L^r(\mathbb C^n),$ for $1\leq r\leq\infty.$ By Young's inequality,
$\tilde{a}P\times\varphi_k^{n-1}$ exists and smooth. Consider the equation
$\overline{\tilde{a}P\times\varphi_k^{n-1}(z)}=\varphi_k^{n-1}\times \overline{\tilde{a}P}(z).$
Let $d\nu_t=\bar{P}d\mu_t$. By polar decomposition, we can write
\[\overline{\tilde{a}P\times\varphi_k^{n-1}(z)}=
\int_{t=0}^{\infty}\varphi_k^{n-1}\times d\nu_t(z)~\bar{\tilde a}(t)t^{2n-1}dt.\]
In view of Lemma \ref{lemma3C}, the above equation can be written as
\[\overline{\tilde{a}P\times\varphi_k^{n-1}(z)}=(2\pi)^{-n}C(k, \gamma)\bar{P}(z)\varphi_{k-p}^{\gamma-1}(z)
\int_{t=0}^{\infty}\varphi_{k-p}^{\gamma-1}(z)~\bar{\tilde a}(t)t^{2\gamma-1}dt,\]
where $\gamma=n+p+q.$ By taking the complex conjugate of both sides, we get
\begin{equation}\label{exp35}
\tilde{a}P\times\varphi_k^{n-1}(z)=(2\pi)^{-n}C(k,\gamma)P(z)\varphi_{k-p}^{\gamma-1}(z)
\int_{t=0}^{\infty}~\tilde{a}(t)\varphi_{k-p}^{\gamma-1}(t)t^{2\gamma-1}dt.
\end{equation}
This completes the proof of the theorem.
\end{proof}
\begin{remark}\label{rk1}
From (\ref{exp35}), we observe that the identities (\ref{exp34}) can be extended for more
general class of functions. An easy example is when $\tilde{a}$ to be a polynomial. Since
the integral in the right-hand side of (\ref{exp35}) exists, even when $\tilde{a}$ is a
radial tempered distribution, therefore, it is a feasible question to prove the identities
(\ref{exp34}) for $f$ to be a tempered distribution, which we leave open for the time being.
\end{remark}

As another application of the identities (\ref{exp1}), we prove that the boundary of
any bounded domain is a set of injectivity for certain weighted twisted spherical means,
for the radial functions on $\mathbb C^n.$ The weights here are spherical harmonics
on $S^{2n-1}.$ We use this to prove that sphere $S_R(o)$ is a set injectivity for the
twisted spherical means with real analytic weight, for the radial functions on
$\mathbb C^n.$
Since this article is more concerned about Weyl correspondence and its applications,
therefore, we skip here to write more histories of sets of injectivity for the twisted
spherical means. We would like to refer \cite{AR,NT1,Sri,Sri2,ST}.

\smallskip

In order to prove these results, we need the following result by Filaseta and Lam \cite{FL},
about the irreducibility of the Laguerre polynomials. Define the Laguerre polynomials by
\[L^\alpha_k(x)=\sum_{i=0}^k(-1)^i\binom{\alpha+k}{k-i}\frac{x^i}{i!},\]
$\text{ where } k\in \mathbb Z_{+} \text{ and } \alpha\in\mathbb C.$
\begin{theorem}\label{th6} \emph{\cite{FL}} Let $\alpha$ be a rational number, which
is not a negative integer. Then for all but finitely many $k\in\mathbb Z_+$, the polynomial
$L_k^\alpha(x)$ is irreducible over the field of rationals.
\end{theorem}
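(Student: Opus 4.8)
The plan is to follow the $p$-adic Newton-polygon method of Schur, Coleman and Hajir, of which this statement is the culminating result of Filaseta and Lam; nothing from the earlier sections of the present paper is needed. First I would replace $L_k^\alpha$ by a polynomial over $\mathbb Z$ with the same splitting behaviour over $\mathbb Q$: writing $\alpha=a/b$ in lowest terms with $b\ge1$, the polynomial $G_k(x):=b^k k!\,L_k^\alpha(x)$ has $x^j$-coefficient equal, up to sign, to $\binom{k}{j}\,b^j$ times the product $\prod_{i=j+1}^{k}(a+bi)$ of a block of $k-j$ consecutive terms of the arithmetic progression with common difference $b$; in particular its constant term is $\prod_{i=1}^{k}(a+bi)$ and its leading coefficient is $\pm b^k$. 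By Gauss's lemma it then suffices to prove that $G_k$ is irreducible over $\mathbb Q$ for all but finitely many $k$.

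Next, for each prime $p\nmid b$ I would read off the $p$-adic Newton polygon of $G_k$: since $p\nmid b^j$, the $p$-valuation of the $x^j$-coefficient equals the total $p$-power contribution of the factors $a+bi$ with $j<i\le k$, plus $v_p\!\left(\binom{k}{j}\right)$, which Kummer's theorem computes from the number of carries when adding $j$ and $k-j$ in base $p$; moreover the right endpoint of the polygon sits at height $0$ because $p\nmid b$. By Dumas's theorem a factorisation $G_k=g\,h$ over $\mathbb Z$ makes the Newton polygon of $G_k$ the Minkowski sum of those of $g$ and $h$, so an edge whose slope has denominator $e$ contributes a positive multiple of $e$ to $\deg g$ or to $\deg h$. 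A judicious choice of $p$ then forces $\deg g$ into a short explicit list, and a rational root is excluded at once by comparing $v_p$ of the constant term with $v_p$ of the leading coefficient.

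The arithmetic input that closes the argument is a Sylvester--Erd\H{o}s type estimate: for $k$ larger than a bound depending only on $a$ and $b$ there is a prime $p$ with $k/2<p\le k$, $p\nmid b$, dividing exactly one of $a+b,a+2b,\dots,a+bk$ and only to the first power. For such a $p$ the Newton polygon of $G_k$ has a single controlled bend, which is incompatible with the existence of any proper factor of degree at most $k/2$; applying this together with the constraints coming from the finitely many primes dividing $ab$ isolates the finite exceptional set of $k$ (which is where the genuinely reducible small-degree cases reside, e.g.\ $2L_2^{(2)}(x)=(x-2)(x-6)$).

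The hard part will be the uniformity in $k$: one must show that the constraints from several primes simultaneously leave no admissible factorisation, and the delicate step is to rule out proper factors of small but positive degree — not merely linear ones — which is exactly where the prime-counting estimates must be pushed nearly to their true strength. This bookkeeping is the technical core of Filaseta and Lam's paper, and I would reproduce it rather than attempt a shortcut.
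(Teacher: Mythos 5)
This theorem is not proved in the paper at all: it is imported verbatim from Filaseta and Lam \cite{FL} and used as a black box (its only role here is to feed Corollary \ref{cor1}). So there is no internal argument to compare your proposal against; the relevant comparison is with the cited reference. Measured against that, your outline is a faithful description of the actual Filaseta--Lam strategy, and the computations you do commit to are correct: the coefficient of $x^j$ in $b^k k!\,L_k^\alpha$ is indeed $(-1)^j\binom{k}{j}b^j\prod_{i=j+1}^{k}(a+bi)$ with the stated constant and leading terms, and your reducible example $2L_2^{(2)}(x)=(x-2)(x-6)$ checks out against the paper's normalization of $L_k^\alpha$.

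As a proof, however, the proposal has a genuine gap, and you have located it yourself: everything that makes the theorem true is deferred to ``reproduce the bookkeeping.'' Concretely, a single prime $p$ with $k/2<p\le k$ cannot close the argument. Even granting its existence and the claims that it divides exactly one term $a+bi_0$ and only to the first power (the first needs an argument since two indices $i\equiv -ab^{-1}\pmod p$ can land in $[1,k]$ when $p\le k$; the second needs $a+bk<p^2$, fine only for large $k$), the resulting Newton polygon constraint via Dumas only forbids factor degrees in a window near $k/2$. To exclude a factor of each degree $\ell$ with $1\le\ell\le k/2$ one needs, for every such $\ell$ simultaneously and uniformly in $k$, a prime $p>\ell$ (in fact $p\ge\ell+$ a constant depending on $a,b$) dividing the block $\prod_{i=k-\ell+1}^{k}(a+bi)$ to a controlled power while not dividing the leading coefficient; the existence of such primes is a greatest-prime-factor estimate for $\ell$ consecutive terms of an arithmetic progression, and establishing it (together with the separate treatment of the finitely many primes dividing $ab$ and of very small $\ell$) is the entire content of \cite{FL}. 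Until that input is supplied, the proposal is a correct roadmap rather than a proof; for the purposes of this paper the honest course is to do what the author does and cite the result.
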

Using Theorem \ref{th6}, we have obtained the following corollary about the zeros of the
Laguerre polynomials.
\begin{corollary}\label{cor1}
Let $k\in\mathbb Z_{+}.$ Then for all but finitely many $k$, the Laguerre polynomials
$L^{n-1}_k$'s have distinct zeros over the reals.
\end{corollary}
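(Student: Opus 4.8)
The plan is to reduce the statement about distinct real zeros to the irreducibility statement of Theorem \ref{th6}. First I would record that $L_k^{n-1}(x)$, for $n \geq 1$ an integer, has parameter $\alpha = n-1$, which is a nonnegative integer, in particular a rational number that is not a negative integer, so Theorem \ref{th6} applies: for all but finitely many $k$, $L_k^{n-1}$ is irreducible over $\mathbb{Q}$. Next I would recall the classical fact (from Sturm–Liouville theory for the Laguerre differential equation, or directly from the three-term recurrence) that all $k$ zeros of $L_k^{n-1}$ are real and lie in $(0,\infty)$. So the only thing that can fail is that a real zero is repeated.

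The key step is the observation that a repeated zero would contradict irreducibility. If $x_0$ is a zero of $L_k^{n-1}$ of multiplicity $\geq 2$, then $x_0$ is a common zero of $L_k^{n-1}$ and its derivative $\frac{d}{dx}L_k^{n-1}$. Since $\deg L_k^{n-1} = k$, the minimal polynomial over $\mathbb{Q}$ of any root of an irreducible $L_k^{n-1}$ (after normalizing to be monic, which only rescales by a nonzero rational) has degree exactly $k$; hence $L_k^{n-1}$ and $\frac{d}{dx}L_k^{n-1}$, the latter of degree $k-1 < k$, cannot share a root unless $\frac{d}{dx}L_k^{n-1}$ is identically zero, which it is not for $k \geq 1$. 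Therefore, whenever $L_k^{n-1}$ is irreducible over $\mathbb{Q}$, it is separable, i.e.\ has $k$ distinct zeros; combined with reality of the zeros, these $k$ zeros are distinct real numbers. For the finitely many exceptional $k$ from Theorem \ref{th6}, and for small $k$ where the derivative degeneracy must be checked, the statement is asserted only "for all but finitely many $k$", so these are absorbed into the exceptional set.

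The main (and essentially only) obstacle is making sure the reduction "repeated root $\Rightarrow$ reducible" is airtight: one must invoke that an irreducible polynomial over a field of characteristic zero is automatically separable, equivalently that $\gcd(L_k^{n-1}, \frac{d}{dx}L_k^{n-1}) = 1$ in $\mathbb{Q}[x]$ when $L_k^{n-1}$ is irreducible (the gcd is a monic divisor of the irreducible polynomial of strictly smaller degree, hence $1$). Everything else—reality of Laguerre zeros and the applicability of Theorem \ref{th6}—is standard. So the proof is short: apply Theorem \ref{th6}, note characteristic zero gives separability, and combine with the classical reality of the zeros.
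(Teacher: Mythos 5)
You have proved a true statement, but not the one the paper is actually using. You read ``distinct zeros'' as ``each individual $L_k^{n-1}$ has simple zeros,'' and your argument for that (irreducibility over $\mathbb{Q}$ in characteristic zero implies separability, combined with reality of the zeros) is sound. But that reading cannot be the intended one: the zeros of $L_k^{n-1}$ are real, positive and simple for \emph{every} $k$ by the classical theory of orthogonal polynomials, so under your interpretation the corollary would hold with no exceptional set and would need neither Theorem \ref{th6} nor any irreducibility input. The way Corollary \ref{cor1} is invoked in the proof of Lemma \ref{lemma7} --- to conclude that the family $\{\varphi_{k-q}^{n+p+q-1}\}_{k}$ cannot all vanish at a fixed radius $R$ --- shows that the operative content is that two \emph{different} Laguerre polynomials $L_{k_1}^{n-1}$ and $L_{k_2}^{n-1}$, with $k_1\neq k_2$ both outside the finite exceptional set, share no real zero.

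That is exactly what the paper's own proof establishes: once $L_{k_1}^{n-1}$ and $L_{k_2}^{n-1}$ are both irreducible over $\mathbb{Q}$ and have different degrees, they are coprime in $\mathbb{Q}[x]$, so there is a B\'ezout identity $P_1L_{k_1}^{n-1}+P_2L_{k_2}^{n-1}=1$ with $P_1,P_2\in\mathbb{Q}[x]$, and this identity persists over $\mathbb{R}$, ruling out a common zero. Your separability argument does not yield this: two polynomials can each have simple zeros and still have a zero in common. The fix is close to what you wrote, though: replace the pair ``$L_k^{n-1}$ and its derivative'' by the pair ``$L_{k_1}^{n-1}$ and $L_{k_2}^{n-1}$'' and run the same minimal-polynomial reasoning --- a common root $x_0$ would force the minimal polynomial of $x_0$ over $\mathbb{Q}$ to be, up to a nonzero rational scalar, both $L_{k_1}^{n-1}$ and $L_{k_2}^{n-1}$, which is impossible since they have different degrees.
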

\begin{proof}
 By Theorem \ref{th6}, there exists $k_o\in\mathbb Z_+$ such that $L_k^{n-1}$'s are
irreducible over $\mathbb Q$ whenever  $k\geq k_o.$ Therefore, we can find polynomials
$P_1, P_2\in\mathbb Q[x]$ such that $P_1L_{k_1}^{n-1}+ P_2L_{k_2}^{n-1}=1,$ over $\mathbb Q$
with $k_1,k_2\geq k_o.$ Since this identity continue to hold on $\mathbb R,$ it follows that
$L_{k_1}^{n-1}$ and $L_{k_2}^{n-1}$ have no common zero over $\mathbb R.$
\end{proof}

In order to prove Lemma \ref{lemma7}, we also need the following lemma from \cite{T} about
the spectral projections of a radial function.
\begin{lemma} \label{lemma1C}\emph{\cite{T}}
Let  $f$ be a radial function in $L^2(\mathbb C^n)$. Then
\[f\times\varphi^{n-1}_k(z)=B^n_k\left\langle f,\varphi^{n-1}_k\right\rangle\varphi^{n-1}_k
~\text{where}~B^n_k=\frac{k!(n-1)!}{(n+k-1)!}.\]
Thus $f$ can expressed as
\[f=(2\pi)^{-n}\sum_{k=0}^\infty B^n_k\left\langle f,\varphi^{n-1}_k\right\rangle\varphi^{n-1}_k.\]
\end{lemma}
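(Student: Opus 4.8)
The statement to prove is Lemma \ref{lemma1C}, the spectral projection formula for a radial $L^2$-function together with the resulting Laguerre expansion.

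\smallskip

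The plan is to exploit the orthogonality and reproducing properties of the Laguerre functions $\varphi_k^{n-1}$ under twisted convolution, which have already appeared in the proof of Lemma \ref{lemma8}, namely $\varphi_k^{n-1}\times\varphi_j^{n-1}=(2\pi)^n\delta_{jk}\varphi_k^{n-1}$. First I would recall that the system $\{\varphi_k^{n-1}\}_{k\geq0}$, suitably normalized, is an orthogonal basis for the radial subspace of $L^2(\mathbb C^n)$; more precisely $\|\varphi_k^{n-1}\|_2^2 = (2\pi)^n B_k^{n}{}^{-1}$ up to the constant bookkeeping, so that a radial $f\in L^2(\mathbb C^n)$ has the expansion $f=\sum_k c_k\varphi_k^{n-1}$ with $c_k$ proportional to $\langle f,\varphi_k^{n-1}\rangle$. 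The key fact is that twisted convolution of a radial function with $\varphi_j^{n-1}$ acts diagonally on this basis: convolving $f=\sum_k c_k\varphi_k^{n-1}$ on the right by $\varphi_j^{n-1}$ and using the orthogonality relation picks out exactly the $k=j$ term, giving $f\times\varphi_j^{n-1}=(2\pi)^n c_j\varphi_j^{n-1}$. Substituting $c_j = (2\pi)^{-n}B_j^n\langle f,\varphi_j^{n-1}\rangle$ (after pinning down the normalization constant $B_k^n=\frac{k!(n-1)!}{(n+k-1)!}$ by evaluating $\langle\varphi_k^{n-1},\varphi_k^{n-1}\rangle$ explicitly via the orthogonality $\int_0^\infty L_k^{n-1}(t)^2 e^{-t}t^{n-1}\,dt=\frac{(n+k-1)!}{k!}$) yields the asserted formula $f\times\varphi_k^{n-1}=B_k^n\langle f,\varphi_k^{n-1}\rangle\varphi_k^{n-1}$.

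\smallskip

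For the second assertion, the expansion of $f$ itself, I would start from the basis expansion $f=\sum_k c_k\varphi_k^{n-1}$ in $L^2$, identify $c_k$ in terms of $\langle f,\varphi_k^{n-1}\rangle$ and the normalization as above, and simply collect the constants to obtain $f=(2\pi)^{-n}\sum_{k}B_k^n\langle f,\varphi_k^{n-1}\rangle\varphi_k^{n-1}$. Equivalently, and perhaps more cleanly, one can observe that summing the first identity over $k$ and invoking the fact that $(2\pi)^{-n}\sum_k\varphi_k^{n-1}$ is (a multiple of) the identity for twisted convolution — i.e. $f\times\big((2\pi)^{-n}\sum_k\varphi_k^{n-1}\big)=f$ — reproduces $f$ from its spectral pieces.

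\smallskip

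The only genuinely delicate points are: (i) justifying convergence — the interchange of the (a priori $L^2$-convergent) sum $\sum_k c_k\varphi_k^{n-1}$ with the twisted convolution against $\varphi_j^{n-1}$, which is legitimate since twisted convolution with the fixed $L^1\cap L^2$ function $\varphi_j^{n-1}$ is continuous on $L^2$; and (ii) getting the normalization constant $B_k^n$ exactly right, which is a bookkeeping matter reducing to the standard $L^2$-norm of the Laguerre polynomial against the weight $t^{n-1}e^{-t}$. Neither is a real obstacle; this lemma is quoted from \cite{T} and the proof is essentially a direct computation once the reproducing identity $\varphi_k^{n-1}\times\varphi_j^{n-1}=(2\pi)^n\delta_{jk}\varphi_k^{n-1}$ is in hand.
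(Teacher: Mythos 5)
Your proposal is correct; note that the paper itself gives no proof of this lemma (it is quoted directly from \cite{T}), so there is nothing to diverge from. Your argument --- expanding the radial $f$ in the orthogonal system $\{\varphi_k^{n-1}\}$, applying $\varphi_k^{n-1}\times\varphi_j^{n-1}=(2\pi)^n\delta_{jk}\varphi_k^{n-1}$ term by term, and fixing the constant via $\|\varphi_k^{n-1}\|_{L^2(\mathbb C^n)}^2=(2\pi)^n\frac{(n+k-1)!}{k!(n-1)!}=(2\pi)^n\bigl(B_k^n\bigr)^{-1}$ --- is exactly the standard derivation, and the normalization $B_k^n=\frac{k!(n-1)!}{(n+k-1)!}$ comes out consistently in both displayed identities.
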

Let $\partial\Omega$ be the boundary of a bounded domain $\Omega$ in $\mathbb C^n.$
Let $\mu_r$ be the normalized surface measure on the sphere
$S_r(o)=\{z\in\mathbb C^n: |z|=r\}$ in $\mathbb C^n.$ For $P\in H_{p,q},$ write
$d\nu_r=Pd\mu_r.$

\begin{lemma}\label{lemma7}
Let $f$ be a radial function on $\mathbb C^n$ such that $e^{\frac{1}{4}|z|^2}f(z)\in
L^p(\mathbb C^n),$ for $1\leq p\leq \infty$. Suppose $f\times\nu_r(z)=0,$
$~\forall ~z\in\partial\Omega$ and $~\forall ~r>0.$ Then $f=0$ a.e. on $\mathbb C^n.$
\end{lemma}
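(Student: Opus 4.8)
The plan is to reduce the vanishing hypothesis to a statement about the Laguerre coefficients of $f$ and then invoke the irreducibility of Laguerre polynomials. First I would expand the radial function $g(z) = e^{\frac14|z|^2}f(z)$ — which lies in $L^p(\mathbb C^n)$ — in the Laguerre basis; equivalently, write $f = \sum_{k\ge 0} c_k \varphi_k^{n-1}$ in an appropriate weighted sense (using Lemma~\ref{lemma1C} for the $L^2$ pieces and a density/duality argument for general $p$, exactly as in the passage following Lemma~\ref{lemma8}). The key computation is then to evaluate $f\times\nu_r(z) = \sum_k c_k\, \varphi_k^{n-1}\times\nu_r(z)$ using Lemma~\ref{lemma3C}: since $d\nu_r = P\,d\mu_r$ with $P\in H_{p,q}$, each term is a constant multiple of $C(k,n+p+q)\, r^{2(p+q)}\varphi_{k-q}^{n+p+q-1}(r)\, P(z)\varphi_{k-q}^{n+p+q-1}(z)$. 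Thus the hypothesis $f\times\nu_r(z)=0$ for all $z\in\partial\Omega$ and all $r>0$ becomes, after dividing by $P(z)$ on the (dense, by the open mapping / real-analyticity remark) set where $P(z)\ne 0$,
\[
\sum_{k\ge q} c_k\, C(k,n+p+q)\, \varphi_{k-q}^{n+p+q-1}(r)\, \varphi_{k-q}^{n+p+q-1}(|z|) = 0
\]
for all $r>0$ and all such $z\in\partial\Omega$.

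Next I would exploit the two free parameters. Fixing $z\in\partial\Omega$ with $P(z)\ne 0$ and letting $r$ vary over $(0,\infty)$: the functions $\{\varphi_{k-q}^{n+p+q-1}(r): k\ge q\}$ are linearly independent (indeed an orthonormal basis of the relevant weighted $L^2$ space), so the coefficients must all vanish:
\[
c_k\, C(k,n+p+q)\, \varphi_{k-q}^{n+p+q-1}(|z|) = 0 \qquad \text{for every } k\ge q.
\]
Here is where Corollary~\ref{cor1} enters: since $\varphi_{k-q}^{n+p+q-1}(|z|) = L_{k-q}^{n+p+q-1}(\tfrac12|z|^2) e^{-\frac14|z|^2}$, and since for all but finitely many $k$ the polynomials $L_{k-q}^{n+p+q-1}$ and $L_{k'-q}^{n+p+q-1}$ ($k\ne k'$) share no common real zero, we can choose two indices $k_1, k_2$ large enough that $L_{k_1-q}^{n+p+q-1}(\tfrac12|z|^2)$ and $L_{k_2-q}^{n+p+q-1}(\tfrac12|z|^2)$ cannot both vanish at a given radius $|z|$. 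Since $\partial\Omega$ is the boundary of a bounded domain, it contains points at infinitely many distinct radii (in fact a nondegenerate interval of radii), so for each large $k$ we can find $z\in\partial\Omega$ with $P(z)\ne 0$ and $\varphi_{k-q}^{n+p+q-1}(|z|)\ne 0$; this forces $c_k = 0$ for all large $k$. A separate, more elementary argument handles the finitely many remaining $k$: for these, the zero set of $L_{k-q}^{n+p+q-1}(\tfrac12|z|^2)$ together with the zero set of $P$ is a proper real-analytic subvariety of $\mathbb C^n$, hence cannot contain $\partial\Omega$, so again each such $c_k$ vanishes.

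Having shown $c_k = 0$ for all $k$, we conclude $f = 0$ a.e. on $\mathbb C^n$. The main obstacle is the bookkeeping needed to make the "two free parameters" argument rigorous for the full range $1\le p\le\infty$: the Laguerre expansion of $f$ converges in $L^2$ by Lemma~\ref{lemma1C}, but for general $p$ (especially $p=\infty$) one must justify the termwise evaluation of $f\times\nu_r$ either by the weighted functional-equation approach referenced after Lemma~\ref{lemma8}, or by first establishing that $f\times\nu_r$ is determined by the sequence $(c_k)$ through the identity above as a convergent series — this is where the decay supplied by $e^{\frac14|z|^2}f\in L^p$ is essential, since it controls the growth of $c_k$. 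A secondary technical point is ensuring that the set of radii $\{|z|: z\in\partial\Omega,\ P(z)\ne 0\}$ is genuinely infinite, which follows because $\partial\Omega$ bounds a nonempty open set and $P$ (being a nonzero harmonic polynomial) vanishes only on a lower-dimensional set.
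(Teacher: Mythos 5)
Your overall route --- expand the radial function $f$ in the Laguerre basis, use Lemma \ref{lemma3C} (equivalently, the polar decomposition in $r$) to reduce the hypothesis to the pointwise identity $c_k\,\varphi_{k-q}^{n+p+q-1}(|z|)\,P(z)=0$ for $z\in\partial\Omega$ and $k\geq q$, and then invoke Corollary \ref{cor1} --- is essentially the paper's, which reaches the same identity via the right-invariant operator $P(\tilde A)$ applied to $f\times\varphi_k^{n-1}$. But your endgame has a genuine gap. You assert that $\partial\Omega$ ``contains points at infinitely many distinct radii (in fact a nondegenerate interval of radii).'' This is false in precisely the most important case: when $\Omega$ is a ball centered at the origin, $\partial\Omega=S_R(o)$ consists of points of a single radius $R$ --- and this is exactly the case needed later for Theorem \ref{th8}. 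There one only obtains $c_k\,\varphi_{k-q}^{n+p+q-1}(R)=0$ at the single radius $R$, and the correct use of Corollary \ref{cor1} is that two sufficiently large distinct indices cannot both have $L^{n+p+q-1}_{\cdot}(\tfrac12R^2)=0$, so at most one large index survives; hence $c_k=0$ for all $k$ outside a finite set, not for all $k$. Your fallback for the ``finitely many remaining $k$'' also fails in this case: the zero set of $L_{k-q}^{n+p+q-1}(\tfrac12|z|^2)$ is a union of spheres centered at the origin, so it can perfectly well contain $\partial\Omega=S_R(o)$; it is not true that a proper real-analytic subvariety cannot contain the boundary.

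As a consequence, you never actually use the decay hypothesis $e^{\frac14|z|^2}f\in L^p$ at the point where it is indispensable: after the coefficient argument one is left with $f$ equal to a finite linear combination of $\varphi_k^{n-1}$'s (the indices $k<q$, for which $\varphi_k^{n-1}\times\nu_r\equiv 0$ yields no information, together with the possible exceptional large index in the centered-sphere case), and it is the growth condition that forces this remainder to vanish. The paper's proof makes the case distinction explicit (sphere centered at the origin versus a boundary meeting an interval of radii) and closes with the decay condition; to repair your argument you must do the same. A smaller point: the reduction to $L^2$ and the termwise evaluation of $f\times\nu_r$ for general $p$ (in particular $p=\infty$) is flagged in your proposal but not carried out; the paper handles it by convolving with a radial, compactly supported smooth approximate identity and applying Lemmas \ref{lemma1C} and \ref{lemma3C}, which you could adopt verbatim.
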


\begin{proof}
Given that $f\times\nu_r(z)=0,$ $~\forall ~z\in\partial\Omega$
and $~\forall ~r>0.$ By polar decomposition, we can write
\begin{equation}\label{exp7}
f\times P\varphi_{k-q}^{n+p+q-1}(z)=\int_{r=0}^{\infty}f\times\nu_r(z)\varphi_{k-q}^{n+p+q-1}(r){r}^{2n-1}dr=0,
\end{equation}
whenever $z\in\partial\Omega$ and $k\geq q.$
Let $\varphi_\epsilon$ be a smooth, radial compactly supported approximate identity
on $\mathbb C^n.$ Then $f\times\varphi_\epsilon\in L^1\cap L^\infty(\mathbb C^n)$
and in particular $f\times\varphi_\epsilon\in L^2(\mathbb C^n).$  Since $\varphi_\epsilon$
is radial, by Lemma \ref{lemma1C}, we can write
\[f\times\varphi_\epsilon\times\nu_r(z)=
\sum_{k\geq 0}B_k^n\left\langle\varphi_\epsilon,\varphi_k^{n-1}\right\rangle f\times\left(\varphi_k^{n-1}\times\nu_r\right)(z).\]
By Lemma \ref{lemma3C}, it follows that
$f\times\varphi_\epsilon\times\nu_r(z)=0, \forall ~k\geq q$ and $\forall ~z\in\partial\Omega.$
Thus, without loss of generality, we can assume $f\in L^2(\mathbb C^n).$
We use the identities (\ref{exp1}) for $\lambda=-1.$
Let
\[\tilde A_j=\widetilde Z_{j,-1}=\frac{\partial}{\partial z_j}+\frac{1}{4}\bar z_j \text{ and }
\tilde A_j^{\ast}=\widetilde Z_{j,-1}^{\ast}=\frac{\partial}{\partial \bar z_j}-\frac{1}{4}\bar z_j, ~j=1,2,\ldots, n.\]
Then the set $\{I, \tilde A_j,\tilde A_j^{\ast}:~j=1,2,\ldots, n\}$ generates
the space of all the right invariant differential operators for twisted convolution on
$\mathbb C^n.$ Using Theorem \ref{th5} for $\lambda=-1$, we get
\begin{equation}\label{exp6}
P(\tilde A)\varphi_k^{n-1}(z)=(-2)^{-p-q}P(z)\varphi_{k-q}^{n+p+q-1}(z),
\end{equation}
if ~$k\geq q$ and ~$0$ otherwise. In view of the identities (\ref{exp6})
and the fact that $P(\tilde A)$ is a right invariant operator,
equation (\ref{exp7}) gives
\[P(\tilde A)\left(f\times\varphi_{k}^{n-1}\right)(z)=0.\]
Since $f$ is radial, by Lemma \ref{lemma1C},
$f\times\varphi_{k}^{n-1}=B^n_k\left\langle f, \varphi_{k}^{n-1} \right\rangle\varphi_{k}^{n-1}.$
Therefore,
\[B^n_k\left\langle f, \varphi_{k}^{n-1} \right\rangle P(\tilde A)(\varphi_{k}^{n-1})(z)=0.\]
Once again using the identities (\ref{exp6}), we can write
\[\left\langle f, \varphi_{k}^{n-1} \right\rangle P(z)\varphi_{k-q}^{n+p+q-1}(z)=0,\]
whenever $z\in\partial\Omega$ and $k\geq q.$ Since $P$ is a non zero harmonic polynomial,
therefore, by the maximal principle for harmonic function, $P$ can not vanish on
$\partial\Omega.$ Thus,
\[\left\langle f, \varphi_{k}^{n-1}\right\rangle\varphi_{k-q}^{n+p+q-1}(z)=0,\]
whenever $z\in\partial\Omega$ and $k\geq q.$ Since $\varphi_{k-q}^{n+p+q-1}$
is radial, we have two cases. First, when $\partial\Omega$ is a sphere centered
at the origin. Let $\partial\Omega=\{z\in\mathbb C^n: |z|=R\}.$ In view of Corollary
\ref{cor1}, the functions $\varphi_{k-p}^{n+p+q-1}$'s can not have a common zero
except for finitely many $k\in\mathbb Z_+$ with $k\geq q.$ Thus, there exists
some $k_o\in\mathbb Z_+$ such that $\left\langle f, \varphi_{k}^{n-1}\right\rangle=0,$
$~\forall ~k\geq k_o\geq q.$ Hence $f$ is a finite linear combination of
$\varphi_{k}^{n-1}$'s. By the given decay condition on $f$, we conclude
that $f=0$ a.e. on  $\mathbb C^n$. On the other hand, when $\partial\Omega$
is of non-constant curvature (or sphere off centered the origin), we reach
the conclusion that
\[\left\langle f, \varphi_{k}^{n-1}\right\rangle\varphi_{k-q}^{n+p+q-1}(R)=0,\]
whenever $R\in(a, b)$ with $a<b$ and $k\geq q.$ This is possible only if
$\left\langle f, \varphi_{k}^{n-1}\right\rangle=0,$ $~\forall ~k\geq q.$
Thus, we infer that  $f=0$ a.e. on  $\mathbb C^n.$
\end{proof}

Using Lemma \ref{lemma7}, we prove the following partial result for injectivity
of the TSM with real analytic weight. Let $g$ be a real analytic function
on $\mathbb C^n.$ Let $d\nu_r=gd\mu_r$ and $S_R(o)=\{z\in\mathbb C^n: |z|=R\}.$

\begin{theorem}\label{th8}
Let $f$ be a radial function on $\mathbb C^n$ such that $e^{\frac{1}{4}|z|^2}f(z)\in
L^p(\mathbb C^n),$ for $1\leq p\leq \infty$. Suppose $f\times\nu_r(z)=0,$
$~\forall ~z\in S_R(o)$ and $~\forall ~r>0.$ Then $f=0$ a.e. on $\mathbb C^n.$
\end{theorem}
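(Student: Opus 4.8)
The plan is to strip off a single bigraded solid harmonic from the real analytic weight $g$ and then appeal to (the proof of) Lemma~\ref{lemma7}. Throughout we may assume $g\not\equiv0$ (otherwise $\nu_r\equiv0$ and there is nothing to prove), and note that $e^{\frac{1}{4}|z|^2}f\in L^p$ forces $f\in L^1(\mathbb C^n)$. The first move is to integrate out the radial parameter: for a radial $\psi\in C_c^\infty((0,\infty))$, writing $g_\psi(w)=g(w)\psi(|w|)$ (a bounded, compactly supported function, so that $f\times g_\psi$ is a well-defined continuous function), polar decomposition gives
\[
f\times g_\psi(z)=\int_0^\infty \big(f\times\nu_r(z)\big)\,\psi(r)\,r^{2n-1}\,dr=0,\qquad z\in S_R(o).
\]

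Next I would expand $g$ in bigraded solid harmonics. Since $g$ is real analytic on $\mathbb C^n\cong\mathbb R^{2n}$,
\[
g(w)=\sum_{p,q\ge0}\ \sum_{j=1}^{d(p,q)} b_j^{p,q}(|w|)\,P_j^{p,q}(w),
\]
where $P_j^{p,q}\in H_{p,q}$ is the solid harmonic restricting to $Y_j^{p,q}$ on $S^{2n-1}$, each $b_j^{p,q}$ is a real analytic function of $|w|$, and the series converges rapidly on compact sets. As $g\not\equiv0$, some $b:=b_{j_0}^{p_0,q_0}$ is not identically zero; set $P:=P_{j_0}^{p_0,q_0}\in H_{p_0,q_0}$, a nonzero harmonic polynomial. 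Plugging the expansion $g_\psi=\sum_{p,q,j}(b_j^{p,q}\psi)P_j^{p,q}$ into the previous display and interchanging the (rapidly convergent) sum with the twisted convolution,
\[
0=f\times g_\psi(z)=\sum_{p,q,j} f\times\big((b_j^{p,q}\psi)\,P_j^{p,q}\big)(z),\qquad z\in S_R(o).
\]
By Lemma~\ref{lemma3C} together with the special Hermite expansion of the radial function $f$, each summand is of the form $P_j^{p,q}(w)\cdot(\text{radial function of }|w|)$, so at $z=R\omega\in S_R(o)$ it is a scalar multiple of $Y_j^{p,q}(\omega)$; since $\{Y_j^{p,q}\}$ is an orthonormal basis of $L^2(S^{2n-1})$, every summand vanishes separately on $S_R(o)$. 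In particular, for all $z\in S_R(o)$ and all radial $\psi\in C_c^\infty((0,\infty))$,
\[
0=f\times\big((b\psi)P\big)(z)=\int_0^\infty\big(f\times(P\mu_r)(z)\big)\,b(r)\,\psi(r)\,r^{2n-1}\,dr .
\]
For each fixed $z$, $r\mapsto f\times(P\mu_r)(z)$ is locally integrable against $r^{2n-1}dr$ (as $f\in L^1$ and $P$ is a polynomial), so this forces $b(r)\,f\times(P\mu_r)(z)=0$ for a.e.\ $r$; since $b$ is real analytic and $\not\equiv0$ it has isolated zeros, whence $f\times(P\mu_r)(z)=0$ for a.e.\ $r>0$ and every $z\in S_R(o)$. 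Integrating against $\varphi_{k-q_0}^{n+p_0+q_0-1}(r)\,r^{2n-1}dr$ yields, for every $z\in S_R(o)$ and $k\ge q_0$,
\[
f\times\big(P\,\varphi_{k-q_0}^{n+p_0+q_0-1}\big)(z)=0 .
\]

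This last identity is exactly equation~(\ref{exp7}) in the proof of Lemma~\ref{lemma7}, now with weight $P\in H_{p_0,q_0}$ and with $\Omega$ the open ball $B_R(o)$ (so $\partial\Omega=S_R(o)$). From here the argument of Lemma~\ref{lemma7} applies unchanged: identities~(\ref{exp6}) and the right invariance of $P(\tilde A)$ give $P(\tilde A)\big(f\times\varphi_k^{n-1}\big)(z)=0$ on $S_R(o)$; $f$ radial, Lemma~\ref{lemma1C}, and a second use of~(\ref{exp6}) reduce this to $\langle f,\varphi_k^{n-1}\rangle\,P(z)\,\varphi_{k-q_0}^{n+p_0+q_0-1}(z)=0$ on $S_R(o)$; the maximum principle removes the nonvanishing harmonic factor $P$, and Corollary~\ref{cor1} forces $\langle f,\varphi_k^{n-1}\rangle=0$ for all $k$ past some $k_0$. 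Hence $f$ is a finite linear combination of the $\varphi_k^{n-1}$, so $e^{\frac{1}{4}|z|^2}f$ is a polynomial in $|z|^2$, and the hypothesis $e^{\frac{1}{4}|z|^2}f\in L^p(\mathbb C^n)$ forces $f=0$ a.e.

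I expect the substance to lie entirely in the middle step. Justifying the termwise interchange of the bigraded expansion of $g$ with the twisted convolution will use the rapid convergence that real analyticity provides, but the real point is the cancellation of the radial factor $b$: it is precisely because a non-identically-zero real analytic function has only isolated zeros that one can pass from $f\times((b\psi)P)\equiv0$ on $S_R(o)$ to $f\times(P\mu_r)\equiv0$ (for a.e.\ $r$) on $S_R(o)$, thereby reducing the real analytic weight to a single bigraded harmonic and landing in the setting of Lemma~\ref{lemma7}. The one routine wrinkle is the approximate-identity step hidden in the proof of Lemma~\ref{lemma7} (needed because $f$ is only assumed in $L^1$), which goes through here since the threshold $k_0$ produced by Corollary~\ref{cor1} does not depend on $f$.
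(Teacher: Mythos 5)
Your argument is correct and its overall plan coincides with the paper's: decompose the real analytic weight $g$ into pieces $\tilde a(|z|)P(z)$ with $P\in H_{p,q}$ and $\tilde a$ real analytic, show that the weighted mean attached to each such piece still annihilates $f$ on $S_R(o)$, cancel $\tilde a$ using the fact that a nonzero real analytic function of one variable has isolated zeros, and finish by the sphere case of Lemma~\ref{lemma7}. Where you genuinely differ is in the middle reduction. The paper isolates a single bigraded component by $U(n)$-averaging: it sets $g_{lm}(z)=d(p,q)\int_{U(n)}g(\sigma^{-1}z)t_{lm}^{p,q}(\sigma)\,d\sigma$ and, using only the substitution $w\mapsto\sigma w$ in the convolution integral together with the radiality of $f$ and the $U(n)$-invariance of $S_R(o)$, gets $f\times g_{lm}\mu_r(z)=d(p,q)\int_{U(n)}f\times g\mu_r(\sigma^{-1}z)\,t_{lm}^{p,q}(\sigma)\,d\sigma=0$ in one line --- no series, no interchange of limits, no appeal to Lemma~\ref{lemma3C}. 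You instead expand $g$ in its bigraded harmonic series, interchange the sum with the twisted convolution, and separate the terms via Lemma~\ref{lemma3C} and Lemma~\ref{lemma1C} plus orthogonality of the $Y_j^{p,q}$ on the sphere; this does work (uniform convergence of the expansion on the compact support of $g_\psi$ justifies the interchange, and the approximate-identity reduction to $L^2$ that you flag handles the special Hermite expansion of $f$), but it is the heavier route and is exactly what forces you to carry the convergence and $L^1$-versus-$L^2$ caveats at the end; the group-averaging identity buys all of that for free. On the other hand, your treatment of stripping off the radial factor $b$ (testing against $\psi$, then invoking isolated zeros) is more explicit than the paper's one-clause ``since $\tilde a_{lm}$ is real analytic.'' One shared blemish: your final step ``$e^{\frac14|z|^2}f$ is a polynomial in $|z|^2$, so the hypothesis forces $f=0$'' fails for $p=\infty$, where a constant polynomial survives, so $f=c\varphi_0^{n-1}$ is not excluded when the selected component has $q_0\geq1$ (indeed $\varphi_0^{n-1}\times P\mu_r\equiv0$ for $P\in H_{p,q}$ with $q\geq1$ by Lemma~\ref{lemma3C}); this edge case is present in the paper's Lemma~\ref{lemma7} as well and is not something your argument introduces.
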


\begin{proof}
By the given conditions, we have $f\times g\mu_r(z)=0,~\forall ~z\in S_R(o)$ and $~\forall ~r>0.$
Since $g$ is continuous, its spherical harmonic components can be expressed as
\[g_{lm}(z)=d(p,q)\int_{U(n)}g(\sigma^{-1}z)t_{lm}^{p,q}(\sigma)d\sigma,\]
for $1 \leq l,m\leq d(p,q).$ As $f$ is a radial function, we can write
\begin{eqnarray*}
f\times g_{lm}\mu_r(z)&=&d(p,q)\int_{U(n)}t_{lm}^{p,q}(\sigma)\int_{S_r(o)}f(z-w)g(\sigma^{-1}w)
e^{\frac{i}{2}\text{Im}(z.\bar{w})}d\mu_r(w)d\sigma\\
&=&d(p,q)\int_{U(n)}t_{lm}^{p,q}(\sigma)\int_{S_r(o)}f(\sigma^{-1}z-w)g(w)
e^{\frac{i}{2}\text{Im}(\sigma^{-1}z.\bar{w})}d\mu_r(w)d\sigma\\
&=&d(p,q)\int_{U(n)}f\times g\mu_r(\sigma^{-1}z)t_{lm}^{p,q}(\sigma)d\sigma=0,\\
\end{eqnarray*}
for all $z\in S_R(o)$ and $~\forall ~r>0.$ Since, each of the component $g_{lm}$
of $g$ can also be expressed as $g_{lm}(z)=\tilde a_{lm}(|z|)P(z),$ where $P\in H_{p,q}.$
Therefore, it follows that $\tilde a_{lm}(r)f\times P\mu_r(z)=0,~\forall ~z\in S_R(o)$
and $~\forall ~r>0.$ Since $\tilde a_{lm}$ is real analytic, by Lemma \ref{lemma7},
we conclude that $f=0$ a.e. on $\mathbb C^n.$
\end{proof}

We observe that the similar identities as to the identities (\ref{exp1}) continue
to work for the generalized Laguerre functions, (see \cite{EMOT}). These are the
eigenfunctions of the special Hermite operator, (see \cite{AR}).
Let $\mathbb C_\sharp=\{a\in\mathbb C: -a\notin\mathbb Z_+, \Re(a)<1\}.$
The formal expansion of the generalized Laguerre function is
\[M_a^{n-1}(x)=\frac{\Gamma(n-a)}{\Gamma(1-a)\Gamma(n)}\sum_{s=0}^{\infty}\frac{a_sx^s}{(n+s-1)!s!}; ~x, a\in\mathbb C,\]
where $a_s=a(a+1)(a+2)\cdots(a+s-1).$
For $a=-k, ~k\in\mathbb Z_+$, the function $M_{-k}^{n-1}$ is the usual Laguerre
polynomial $L_k^{n-1}$ of degree $k$. For $a\in\mathbb C_\sharp,$ the function
$M_{a}^{n-1}$ has another representation
\[M_{a}^{n-1}(x)=e^x\sum_{i=0}^\infty(-1)^i\binom{n+a+i-1}{a}\frac{x^i}{i!}.\]
The following properties as similar to the Laguerre polynomial $L_k^{n-1},$ continue
to hold for the generalized Laguerre functions $M_{a}^{n-1}$'s.
\begin{lemma}\label{lemma4}
Let $a\in\mathbb C_\sharp.$ Then
\[\frac{d}{dx}M_a^n(x)=-M_{a-1}^{n+1}(x)\text{ and }M_{a-1}^{n+1}(x)+M_a^n(x)=M_a^{n+1}(x).\]
\end{lemma}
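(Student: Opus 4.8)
The plan is to prove both recursion relations by a direct coefficient comparison, using the series representation of the generalized Laguerre function that is available on $\mathbb C_\sharp$. Replacing the superscript $n-1$ by a general integer $m$ in the stated formula gives
\[M_{a}^{m}(x)=e^{x}\sum_{i=0}^{\infty}(-1)^{i}\binom{m+a+i}{a}\frac{x^{i}}{i!}=e^{x}\sum_{i=0}^{\infty}(-1)^{i}\,\frac{\Gamma(m+a+i+1)}{\Gamma(a+1)\,\Gamma(m+i+1)}\,\frac{x^{i}}{i!},\]
and for fixed $a\in\mathbb C_\sharp$ every Gamma factor occurring here is finite and nonzero, because $a\notin\{0,-1,-2,\dots\}$, while the power series in $x$ has infinite radius of convergence since its coefficients grow only like $i^{\Re a}/i!$. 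Hence the series may be differentiated and rearranged termwise, which is all the analytic input needed. In effect Lemma~\ref{lemma4} is the exact counterpart of the classical relations (\ref{exp23}), the only difference being that the sums are now genuinely infinite, so the finite-sum manipulation behind (\ref{exp23}) must be replaced by the convergence remark just made.

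First I would prove $\frac{d}{dx}M_{a}^{n}=-M_{a-1}^{n+1}$. Write $M_{a}^{n}=e^{x}g$ with $g(x)=\sum_{i\geq0}(-1)^{i}c_{i}\,x^{i}/i!$ and $c_{i}=\Gamma(n+a+i+1)/(\Gamma(a+1)\Gamma(n+i+1))$; then $\frac{d}{dx}M_{a}^{n}=e^{x}(g+g')$, and after reindexing $g'$ the coefficient of $(-1)^{i}e^{x}x^{i}/i!$ in $\frac{d}{dx}M_{a}^{n}$ is $c_{i}-c_{i+1}$. A one-line computation with $\Gamma(z+1)=z\Gamma(z)$, equivalently Pascal's identity $\binom{N}{a}-\binom{N+1}{a}=-\binom{N}{a-1}$ with $N=n+a+i$, shows $c_{i}-c_{i+1}=-\Gamma(n+a+i+1)/(\Gamma(a)\Gamma(n+i+2))$, which is precisely $-\binom{(n+1)+(a-1)+i}{a-1}$, i.e.\ the coefficient of $(-1)^{i}e^{x}x^{i}/i!$ in $-M_{a-1}^{n+1}$. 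Comparing the series termwise yields the first identity.

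For $M_{a-1}^{n+1}+M_{a}^{n}=M_{a}^{n+1}$ I would simply add the three series and compare coefficients of $(-1)^{i}e^{x}x^{i}/i!$; the identity collapses to
\[\frac{\Gamma(n+a+i+1)}{\Gamma(a)\,\Gamma(n+i+2)}+\frac{\Gamma(n+a+i+1)}{\Gamma(a+1)\,\Gamma(n+i+1)}=\frac{\Gamma(n+a+i+2)}{\Gamma(a+1)\,\Gamma(n+i+2)},\]
that is, $\binom{n+a+i}{a-1}+\binom{n+a+i}{a}=\binom{n+a+i+1}{a}$, which is Pascal's rule and follows from $\Gamma(z+1)=z\Gamma(z)$ after clearing denominators. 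Alternatively, one may note that $M_{a}^{n-1}$ is a Gamma-multiple of Kummer's function ${}_1F_1$ — the factor $e^{x}$ being the Kummer transformation — and quote the standard derivative and contiguous relations for ${}_1F_1$. The only point requiring care is the Gamma-function bookkeeping together with the justification of termwise differentiation and rearrangement; there is no substantive obstacle, this being a purely computational lemma that, like (\ref{exp23}), will serve only to drive the induction in the generalized-Laguerre extension of Lemma~\ref{lemma3}.
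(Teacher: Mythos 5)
Your proposal is correct and follows essentially the same route as the paper: termwise differentiation of the series $M_a^m(x)=e^x\sum_i(-1)^i\binom{m+a+i}{a}x^i/i!$, a reindexing, and Pascal's identity for the Gamma-quotient binomial coefficients. You are somewhat more complete than the paper, which leaves the second (addition) identity and the justification of termwise differentiation implicit, but the underlying computation is the same.
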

\begin{proof}
We have
\[\frac{d}{dx}M_{a}^{n}(x)=M_{a}^{n}(x)+ e^x\sum_{i=1}^\infty(-1)^i\binom{n+a+i}{a}\frac{x^{i-1}}{(i-1)!}.\]
Put $i=j+1.$ Then
\begin{eqnarray*}
\frac{d}{dx}M_{a}^{n}(x)
                       &=& M_{a}^{n}(x)-M_{a}^{n+1}(x)\\
                       &=&-M_{a-1}^{n+1}(x).
\end{eqnarray*}
\end{proof}
We use the above lemma to prove an analogue of the identities (\ref{exp1})
for the generalized Laguerre functions
$\varphi_a^{n-1}(z)=M_a^{n-1}\left(\frac{1}{2}|z|^2\right)e^{-\frac{1}{4}|z|^2}.$
These $\varphi_a^{n-1}$'s are precisely the eigenfunctions of the special
Hermite Operator $A=-\Delta_z+\frac{1}{4}|z|^2$ with eigenvalue $-2a+n.$ For the sake
of simplicity, we prove the result for the case $\lambda=1.$
\begin{lemma}\label{lemma5}
Let $a\in\mathbb C_\sharp$ and  $P_1(z)=z_1^p\bar z_2^q\in H_{p,q}.$ Then
\begin{equation}\label{exp4}
 P_1(\widetilde Z)\varphi_a^{n-1}(z)=(-2)^{-p-q}P_1(z)\varphi_{a-p}^{n+p+q-1}(z).
\end{equation}
\end{lemma}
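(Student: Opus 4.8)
The plan is to repeat the argument of Lemma~\ref{lemma3} almost verbatim, with the Laguerre recursion relations (\ref{exp23}) replaced by their analogues for the generalized Laguerre functions established in Lemma~\ref{lemma4}. First I would substitute $z.\bar z=2t$, so that $\varphi_a^{m}(z)=M_a^{m}(t)e^{-t/2}$, and use the chain-rule identities $\partial/\partial\bar z_1=\frac12 z_1\,\partial/\partial t$ and $\partial/\partial z_2=\frac12\bar z_2\,\partial/\partial t$ on functions of $t$. Exactly as in the polynomial case (the $\pm\frac14$ terms cancel in the first computation and reinforce in the second), this gives
\[\widetilde Z_1^{\ast}\varphi_a^{m}(z)=\tfrac12 z_1\tfrac{d}{dt}M_a^{m}(t)\,e^{-t/2},\qquad
\widetilde Z_2\varphi_a^{m}(z)=\tfrac12\bar z_2\Big(\tfrac{d}{dt}M_a^{m}(t)-M_a^{m}(t)\Big)e^{-t/2}.\]
Feeding in the first identity of Lemma~\ref{lemma4} turns the left-hand expression into $-\frac12 z_1\varphi_{a-1}^{m+1}(z)$, and feeding in both identities of Lemma~\ref{lemma4} turns the right-hand expression into $-\frac12\bar z_2\varphi_a^{m+1}(z)$.

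With these two building blocks the rest is iteration. Since $P_1(\widetilde Z)=\widetilde{Z_1^{\ast}}^{\,p}\widetilde Z_2^{\,q}$, and since $\widetilde Z_1^{\ast}$ and $\widetilde Z_2$ commute with multiplication by $z_1$ and $\bar z_2$ respectively, a $q$-fold application of the second relation gives $\widetilde Z_2^{\,q}\varphi_a^{n-1}(z)=(-\tfrac12)^{q}\bar z_2^{\,q}\varphi_a^{n+q-1}(z)$, and a subsequent $p$-fold application of the first gives $\widetilde{Z_1^{\ast}}^{\,p}\varphi_a^{n+q-1}(z)=(-\tfrac12)^{p} z_1^{\,p}\varphi_{a-p}^{n+p+q-1}(z)$. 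Combining the two shifts yields
\[P_1(\widetilde Z)\varphi_a^{n-1}(z)=(-2)^{-p-q}z_1^{\,p}\bar z_2^{\,q}\varphi_{a-p}^{n+p+q-1}(z)=(-2)^{-p-q}P_1(z)\varphi_{a-p}^{n+p+q-1}(z),\]
which is (\ref{exp4}); a clean way to package the iteration is a double induction on $p$ and $q$ as in the last display of the proof of Lemma~\ref{lemma3}. Note that, unlike in Lemma~\ref{lemma3}, there is no side condition of the form $k\ge p$: since $a\in\mathbb C_\sharp$ is not a negative integer, $M_{a-p}^{n+p+q-1}$ is a genuine (entire in $x$, never identically zero) generalized Laguerre function, and no degeneration occurs.

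I expect the only delicate point --- the main, though mild, obstacle --- to be the bookkeeping of parameters needed to guarantee that Lemma~\ref{lemma4} is legitimately invoked at each step. Concretely, one must check that every lowered degree parameter $a-j$, $0\le j\le p$, still lies in $\mathbb C_\sharp$; this is immediate, because $\Re(a)<1$ forces $\Re(a-j)<1$, while $-a\notin\mathbb Z_+$ together with $\Re(a)<1$ rules out $-(a-j)\in\mathbb Z_+$. The order index only increases through $n-1,n,\dots,n+p+q-1$, so it remains a nonnegative integer and Lemma~\ref{lemma4} applies throughout, and term-by-term differentiation of the defining series of $M_a^{m}$ is justified by its locally uniform convergence. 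With these checks in place the computation is entirely parallel to the proof of Lemma~\ref{lemma3}.
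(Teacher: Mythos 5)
Your proposal is correct and is exactly the argument the paper intends: the paper omits the proof of Lemma~\ref{lemma5}, saying only that it is ``quite similar to the proof of Lemma~\ref{lemma3},'' and you carry out precisely that adaptation, replacing the recursions (\ref{exp23}) by those of Lemma~\ref{lemma4} and iterating. Your extra checks (that $a-j$ stays in $\mathbb C_\sharp$ and that the order index remains admissible) are sound and fill in details the paper leaves implicit.
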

\begin{proof} The proof of Lemma \ref{lemma5} is quite similar to the proof
of Lemma \ref{lemma3} and hence we omit it here.
\end{proof}

\begin{remark}\label{rk2}
$(a).$  For $a\in\mathbb C_\sharp,$ the function $\varphi_a^{n-1}$ satisfies the
growth condition $\varphi_a^{n-1}(z)\approx B_a|z|^{2(a-n)}e^{\frac{1}{4}|z|^2}$
as $ |z|\rightarrow\infty,$ (see \cite{AR}). This shows that we can not use Lemma
\ref{lemma2} to generalize Lemma \ref{lemma5} for an arbitrary $P\in H_{p,q}.$
However, as the proof is based on the process of point wise differentiation and
multiplication, it is enough to prove the identity (\ref{exp4}) for each compact
set in $\mathbb C^n.$

\smallskip

$(b).$ Let $f(z)=\tilde a(|z|) P(z),$ where $P\in H_{p,q}.$ Suppose $f$ satisfies
the condition $f(z)e^{(\frac{1}{4}+\epsilon)|z|^2}\in L^p(\mathbb C^n),$ for some
$\epsilon>0$ and $1\leq p\leq\infty$. Then, it is a feasible problem to ask for
an analogue of the Hecke-Bochner identity for $f\times\varphi_a^{n-1}.$

\smallskip

Since $\varphi_a^{n-1}$ is an eigenfunction of the operator $A$ with eigenvalue
$-2a+n,$ the function $f\times\varphi_a^{n-1}$ is also an eigenfunction of $A$
with eigenvalue $-2a+n.$ As $A$ is an elliptic operator and eigenfunction of an
elliptic operator is real analytic \cite{N}, the function $f\times\varphi_a^{n-1}$
must be a real analytic function on $\mathbb C^n.$
\end{remark}

\section{A real analytic expansion of the spectral projections}\label{section4}
In this section, we derive a real analytic expansion for the spectral projections
$f\times\varphi_k^{n-1}$ for function $f\in L^2(\mathbb C^n).$ For $n=1,$ we have
given its proof in \cite{Sri2}, which is much simpler and used for proving some
injectivity results for the twisted spherical means on $\mathbb C.$ A non-empty
set $C\subset\mathbb C^n~(n\geq2)$ satisfying $\lambda C\subseteq C,$ for all
$\lambda\in\mathbb C$ is called a complex cone. Using expansion of the spectral
projections, we have shown that any complex cone $C$ is a set of injectivity for
the twisted spherical means for $L^p(\mathbb C^n), ~1\leq p\leq2$ if and only if
$C\nsubseteq P^{-1}_{s,t}(0),$ $\forall ~s,t\in\mathbb Z_+.$

\smallskip

We need the following result about an estimate of the bi-graded spherical harmonics.

\begin{lemma}\label{lemma6}
Let $Y_{p,q}\in H_{p,q}.$ Then $\|Y_{p,q}\|_\infty\leq C(p+q+1)^{n-1}\|Y_{p,q}\|_2.$
\end{lemma}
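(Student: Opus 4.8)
The plan is to bound the sup-norm of a bigraded solid harmonic $Y_{p,q}$, restricted to the unit sphere $S^{2n-1}$, in terms of its $L^2(S^{2n-1})$ norm, with a polynomial-in-$(p+q)$ constant. First I would recall that $H_{p,q}$, viewed as a space of functions on $S^{2n-1}$, is an eigenspace of the Laplace--Beltrami operator on $S^{2n-1}$: since $Y_{p,q}$ is the restriction of a harmonic polynomial that is homogeneous of bidegree $(p,q)$, hence homogeneous of total degree $p+q$, it is a (real) spherical harmonic of degree $p+q$ on $S^{2n-1}\subset\mathbb R^{2n}$, so $H_{p,q}\subseteq\mathcal H_{p+q}(S^{2n-1})$. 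The standard tool is the reproducing (zonal) kernel: if $\{Y_j^{p,q}\}_{j=1}^{d(p,q)}$ is an orthonormal basis of $H_{p,q}$, then for any fixed $\omega$, evaluation $Y\mapsto Y(\omega)$ is a linear functional on the finite-dimensional Hilbert space $H_{p,q}$, represented by some $Z_\omega\in H_{p,q}$, and by the usual Cauchy--Schwarz argument
\[
|Y_{p,q}(\omega)|\le \|Z_\omega\|_{L^2}\,\|Y_{p,q}\|_{L^2},\qquad \|Z_\omega\|_{L^2}^2=\sum_{j}|Y_j^{p,q}(\omega)|^2.
\]
So the whole problem reduces to bounding $\sum_j |Y_j^{p,q}(\omega)|^2$; by $K=U(n)$-invariance this sum is independent of $\omega$, hence equals its average over $S^{2n-1}$, which is $d(p,q)/|S^{2n-1}|$. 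Thus $\|Y_{p,q}\|_{L^\infty}^2 \le C\, d(p,q)\,\|Y_{p,q}\|_{L^2}^2$.

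The remaining step is the dimension count: I would show $d(p,q)\le C(p+q+1)^{2(n-1)}$, which gives the claimed exponent $n-1$ after taking square roots. The dimension of $H_{p,q}$ on $\mathbb C^n$ is classically
\[
d(p,q)=\binom{n+p-1}{p}\binom{n+q-1}{q}-\binom{n+p-2}{p-1}\binom{n+q-2}{q-1},
\]
and each binomial coefficient $\binom{n+p-1}{p}$ is a polynomial in $p$ of degree $n-1$, so $d(p,q)=O(p^{n-1}q^{n-1})=O((p+q+1)^{2(n-1)})$; crudely one can just bound $d(p,q)\le\binom{n+p-1}{p}\binom{n+q-1}{q}\le C(p+1)^{n-1}(q+1)^{n-1}\le C(p+q+1)^{2(n-1)}$. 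Substituting into the previous display and taking square roots yields $\|Y_{p,q}\|_{L^\infty}\le C(p+q+1)^{n-1}\|Y_{p,q}\|_{L^2}$.

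I do not anticipate a serious obstacle here; the only points requiring a little care are (i) making precise that the pointwise sum $\sum_j|Y_j^{p,q}(\omega)|^2$ is genuinely constant in $\omega$ — this follows because $U(n)$ acts transitively on $S^{2n-1}$ and $\pi_{p,q}$ is unitary, so the kernel $\sum_j Y_j^{p,q}(\omega)\overline{Y_j^{p,q}(\omega')}$ is $U(n)$-invariant and its diagonal is constant — and (ii) getting the exponent exactly $n-1$ rather than something larger, which is exactly why I want the factorization $d(p,q)=O((p+1)^{n-1}(q+1)^{n-1})$ rather than a bound like $O((p+q)^{2n-1})$ coming from the full sphere $S^{2n-1}$. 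One could alternatively invoke the general estimate for the dimension of degree-$m$ spherical harmonics on $S^{N-1}$, which grows like $m^{N-2}$; here $N=2n$ gives growth $m^{2n-2}$ and hence $\|Y\|_\infty\le Cm^{n-1}\|Y\|_2$ with $m=p+q$, recovering the same bound. Either route completes the proof.
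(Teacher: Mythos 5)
Your argument is correct, but it is not the route the paper takes. The paper works directly with the coefficient expansion $Y_{p,q}(\omega)=\sum_{|\alpha|=p}\sum_{|\beta|=q}c_{\alpha\beta}\omega^\alpha\bar\omega^\beta$: it bounds $\|Y_{p,q}\|_\infty$ by $\sum|c_{\alpha\beta}|$ (since $|\omega^\alpha\bar\omega^\beta|\le 1$ on the sphere), applies Cauchy--Schwarz over the index set to get $\sqrt{d(p,q)}\,\bigl(\sum|c_{\alpha\beta}|^2\bigr)^{1/2}$, invokes the norm identity $\|Y_{p,q}\|_{L^2}^2=\sum|c_{\alpha\beta}|^2\alpha!\beta!$ from Thangavelu (so that $\sum|c_{\alpha\beta}|^2\le\|Y_{p,q}\|_{L^2}^2$), and finishes with $\sqrt{d(p,q)}\le C(p+q+1)^{n-1}$. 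You instead use the reproducing-kernel/addition-formula argument: the diagonal of the kernel $\sum_j|Y_j^{p,q}(\omega)|^2$ is constant by $U(n)$-invariance and transitivity, hence equals $d(p,q)/|S^{2n-1}|$, giving $\|Y\|_\infty\le C\sqrt{d(p,q)}\,\|Y\|_2$ directly, and then you bound $d(p,q)\le\binom{n+p-1}{p}\binom{n+q-1}{q}=O\bigl((p+q+1)^{2(n-1)}\bigr)$. Both routes reduce to the same dimension estimate and both are valid. Your version is arguably cleaner: the Cauchy--Schwarz step in the paper really produces $\sqrt{N}$ with $N$ the number of monomials, i.e.\ $\sqrt{\dim P_{p,q}}$ rather than $\sqrt{\dim H_{p,q}}$ (harmless, since $\dim P_{p,q}$ obeys the same $(p+q+1)^{2(n-1)}$ bound, but a small imprecision), whereas the kernel argument gives exactly $\sqrt{d(p,q)/|S^{2n-1}|}$ with no slack and no appeal to a particular coefficient normalization. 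The paper's approach, on the other hand, is more elementary in that it needs nothing about the group action beyond the explicit $L^2$ norm formula for the coefficients.
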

\begin{proof}
Let $\omega\in S^{2n-1}.$ Then $Y_{p,q}$ can be expressed as
\[Y_{p,q}(\omega)=\sum_{|\alpha|=p}\sum_{|\beta|=q}c_{\alpha\beta} \omega^\alpha\bar{\omega}^\beta.\]
It has been shown in \cite{T}, p.66 that
\[\|Y_{p,q}\|_2=\left(\sum_{|\alpha|=p}\sum_{|\beta|=q}|c_{\alpha\beta}|^2\alpha!\beta!\right)^{\frac{1}{2}},\]
where $\dim H_{p,q}=d(p,q)$ is given by
\[d(p,q)=\frac{(p+n-2)!(q+n-2)!((p+n-1)(q+n-1)-pq)}{p!q!(n-1)!}.\]
Since $H_{p,q}$ is a finite dimensional space and the fact that all norms on a
finite dimension space are equivalent, we get the following estimate for the
bigraded spherical harmonics.
\begin{eqnarray*}
\|Y_{p,q}\|_{\infty}&\leq&
\sup_{\omega\in S^{2n-1}}\sum_{|\alpha|=p}\sum_{|\beta|=q}\left|c_{\alpha\beta} \omega^\alpha\bar{\omega}^\beta\right|
\leq\sum_{|\alpha|=p}\sum_{|\beta|=q}\left|c_{\alpha\beta}\right|\\
&\leq&\sqrt{d(p,q)}\left(\sum_{|\alpha|=p}\sum_{|\beta|=q}\left|c_{\alpha\beta}\right|^2\right)^{\frac{1}{2}}
\leq\sqrt{d(p,q)}~\|Y_{p,q}\|_2.
\end{eqnarray*}
By a simple computation, it can be shown that $\sqrt{d(p,q)}\leq C(p+q+1)^{n-1},$
which gives the result.

\end{proof}

\begin{theorem}\label{th4}
Let $f\in L^2(\mathbb C^n).$ Then $Q_k(z)=f\times\varphi_k^{n-1}(z)$ is real analytic and its real
analytic expansion can be written as
\begin{equation}\label{exp5}
Q_k(z)=\sum_{p=0}^{k}\sum_{q=0}^\infty P_{p,q}^{k}(z)\varphi_{k-p}^{n+p+q-1}(z).
\end{equation}
\end{theorem}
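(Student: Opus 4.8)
The plan is to expand $f$ in bigraded spherical harmonics and apply the already-established Hecke--Bochner identity (Lemma~\ref{lemma8}) piece by piece. First I would write, using the orthonormal basis $\{Y_j^{p,q}\}$ of $L^2(S^{2n-1})$,
\[
f(\rho\omega)=\sum_{p,q\geq0}\sum_{j=1}^{d(p,q)}a_j^{p,q}(\rho)Y_j^{p,q}(\omega),
\]
with the radial coefficients $a_j^{p,q}\in L^2(\mathbb{R}^+,\rho^{2n-1}d\rho)$ and $\sum_{p,q,j}\|a_j^{p,q}\|_2^2=\|f\|_2^2<\infty$. Writing $P_j^{p,q}$ for the solid harmonic in $H_{p,q}$ whose restriction to $S^{2n-1}$ is $Y_j^{p,q}$, each summand is of the form $\tilde a P$ with $\tilde a$ radial and $P\in H_{p,q}$, so Lemma~\ref{lemma8} applies: $\tilde a_j^{p,q}P_j^{p,q}\times\varphi_k^{n-1}=0$ unless $k\geq p$, and in that case it equals $(2\pi)^{-n}P_j^{p,q}(z)\,\tilde a_j^{p,q}\times\varphi_{k-p}^{n+p+q-1}(z)$, which is a constant multiple of $P_j^{p,q}(z)\varphi_{k-p}^{n+p+q-1}(z)$ by Lemma~\ref{lemma1C} applied on $\mathbb{C}^{n+p+q}$. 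Summing over $j$ for fixed $p,q$ produces a single bigraded solid harmonic $\tilde P_{p,q}^k\in H_{p,q}$ times $\varphi_{k-p}^{n+p+q-1}(z)$, and summing over $q\geq0$ and $0\leq p\leq k$ gives the claimed expansion, with $P_{p,q}^k(z)=\tilde P_{p,q}^k(z)$ (absorbing $(2\pi)^{-n}$ and the scalar from Lemma~\ref{lemma1C}).

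The real-analyticity of $Q_k$ is immediate from the remark already made in the introduction: $\varphi_k^{n-1}$ is an eigenfunction of the elliptic operator $A=-\Delta_z+\tfrac14|z|^2$ with eigenvalue $2k+n$, hence so is $Q_k=f\times\varphi_k^{n-1}$ (twisted convolution with $\varphi_k^{n-1}$ commutes with $A$ since $A$ generates left-invariant dynamics, and $Q_k$ lies in $L^2$), and eigenfunctions of elliptic operators are real analytic by \cite{N}. So the content of the theorem is the validity and convergence of the series \eqref{exp5}.

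The main obstacle is justifying the interchange of the infinite sum with the twisted convolution, i.e.\ showing the series on the right of \eqref{exp5} converges (locally uniformly, say) and equals $Q_k$. I would handle this by an $L^2$ argument combined with the growth estimates available: the partial sums $f_N=\sum_{p+q\leq N}\sum_j a_j^{p,q}Y_j^{p,q}\to f$ in $L^2(\mathbb{C}^n)$, and twisted convolution with the fixed $L^2$ function $\varphi_k^{n-1}$ is bounded from $L^2$ to $L^\infty$ (indeed $f\mapsto f\times\varphi_k^{n-1}$ maps $L^2$ into the eigenspace, with $\|f\times\varphi_k^{n-1}\|_\infty\leq C\|\varphi_k^{n-1}\|_2\|f\|_2$ by Young), so $f_N\times\varphi_k^{n-1}\to Q_k$ uniformly on $\mathbb{C}^n$. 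Each $f_N\times\varphi_k^{n-1}$ is precisely the finite partial sum $\sum_{p+q\leq N,\,p\leq k}P_{p,q}^{k}(z)\varphi_{k-p}^{n+p+q-1}(z)$ by the computation above, which establishes \eqref{exp5} with convergence in the uniform norm, and hence pointwise. Finally, to see the sum over $p$ is genuinely finite (only $0\leq p\leq k$), one uses the vanishing clause $k<p\Rightarrow\tilde aP\times\varphi_k^{n-1}=0$ of Lemma~\ref{lemma8}; the remaining estimate one needs, to control $\|P_{p,q}^k\|_\infty$ against $\|a_j^{p,q}\|_2$ uniformly, is supplied by Lemma~\ref{lemma6} together with the decay in $q$ of $\langle a_j^{p,q},\varphi_{k-p}^{n+p+q-1}\rangle$, which I would spell out only as much as needed for local uniform convergence.
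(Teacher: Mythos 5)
Your proposal is correct, and the first (algebraic) half coincides with the paper's argument: the paper also starts from the bigraded spherical harmonic decomposition \eqref{Bexp4}, expands each radial coefficient in the basis $\{\varphi_{k-p}^{n+p+q-1}\}$ of $L^2(r^{2(n+p+q)-1}dr)$, and uses the identities \eqref{exp1} together with the orthogonality $\varphi_k^{n-1}\times\varphi_j^{n-1}=(2\pi)^n\delta_{jk}\varphi_k^{n-1}$ --- which is exactly the computation packaged in Lemma~\ref{lemma8} that you invoke. Where you genuinely diverge is in the convergence step. The paper proves local uniform convergence of \eqref{exp5} by hand: it extracts the bound $\|Y^k_{p,q}\|_2\le C\bigl(\tfrac{(k-p)!}{2^{\gamma-1}(k-p+\gamma-1)!}\bigr)^{1/2}$ from the Plancherel identity \eqref{exp38}, upgrades it to a sup-norm bound via Lemma~\ref{lemma6}, estimates $|\varphi_{k-p}^{\gamma-1}(z)|$ pointwise, and closes with a ratio test in $q$; it then identifies the locally uniform limit with the real analytic function $Q_k$ a.e. Your route instead exploits the Cauchy--Schwarz (Young) bound $\|g\times\varphi_k^{n-1}\|_\infty\le\|g\|_2\|\varphi_k^{n-1}\|_2$, so that $L^2$-convergence of the spherical-harmonic partial sums of $f$ immediately yields \emph{global} uniform convergence of the corresponding partial sums of \eqref{exp5} to $Q_k$; this is more elementary, gives a slightly stronger mode of convergence, and makes the estimates of Lemma~\ref{lemma6} essentially unnecessary for the theorem as stated (they survive only as a quantitative by-product in the paper). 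Two small points you should still nail down: (i) your argument controls the partial sums indexed by $p+q\le N$, whereas \eqref{exp5} is an iterated sum $\sum_{p\le k}\sum_q$; since only finitely many $p$ occur, this is repaired by applying the same $L^2\to L^\infty$ argument to each fixed-$p$ block $f^{(p)}=\sum_q\sum_j a_j^{p,q}Y_j^{p,q}$ separately; (ii) real analyticity of $Q_k$ must come from ellipticity of $A$ (as you say), not from the uniform convergence --- a uniform limit of real analytic functions need not be real analytic --- and your write-up correctly keeps these two roles separate.
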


\begin{proof}
We can write the spherical harmonic decomposition of function $f$ as
\begin{equation}\label{exp24}
 f(z)=\sum_{p=0}^\infty\sum_{q=0}^\infty\sum_{j=1}^{d(p,q)}\tilde a_j^{p,q}(|z|)P_{p,q}^j(z).
\end{equation}
Since $f\in L^2(\mathbb C^n)$ and set $\left\{\varphi_{k-p}^{n+p+q-1}:~k\geq p\right\}$ form an orthonornal basis
for $L^2\left(r^{2(n+p+q)-1}dr\right),$ it follows that
\[\tilde a_j^{p,q}=\sum_{k\geq p}C_{k-p,j}^{p,q}\varphi_{k-p}^{n+p+q-1}.\]
In view of the identities (\ref{exp1}) for $\lambda=1,$ we have
\[\tilde a_j^{p,q}P_{p,q}^j=\sum_{k\geq p}(-2)^{p+q}C_{k-p,j}^{p,q}P_{p,q}^j(\widetilde Z)\varphi_{k}^{n-1}.\]
From (\ref{exp24}) and orthogonality relations
$\varphi_{k}^{n-1}\times\varphi_{j}^{n-1}=(2\pi)^{n}\delta_{jk}\varphi_{k}^{n-1},$ we conclude that
\begin{eqnarray*}
f\times\varphi_{k_0}^{n-1}
&=&\sum_{p=0}^{k_0}\sum_{q=0}^\infty\sum_{j=1}^{d(p,q)}(2\pi)^{n}C_{k_0-p,j}^{p,q}P_{p,q}^j\varphi_{k_0-p}^{n+p+q-1}\\
&=&\sum_{p=0}^{k_0}\sum_{q=0}^\infty P_{p,q}^{k_0}\varphi_{k_0-p}^{n+p+q-1}, \text{ where }P_{p,q}^{k_0}\in H_{p,q}.
\end{eqnarray*}
Now, look at the following concrete expression for the spectral projections as
\begin{equation}\label{exp8}
 Q_k(z)=\sum_{p=0}^{k}\sum_{q=0}^\infty P_{p,q}^{k}(z)\varphi_{k-p}^{n+p+q-1}(z),
\end{equation}
where the series on the right-hand side converges to $Q_k$ in $L^2(\mathbb C^n).$
Since the spaces $H_{p,q}$'s are orthogonal among themselves, it follows that
\begin{equation}\label{exp38}
\left\|Q_k\right\|_2^2=\sum_{p=0}^k\sum_{q=0}^\infty\left\|Y_{p,q}^k\right\|_2^2
\left\|\varphi_{k-p}^{\gamma-1}\right\|_2^2<\infty,
\end{equation}
where
\[\left\|\varphi_{k-p}^{\gamma-1}\right\|_2^2=\int_0^\infty\left|\varphi_{k-p}^{\gamma-1}(r)\right|^2r^{2\gamma-1}dr
=2^{\gamma-1}\frac{(k-p+\gamma-1)!}{(k-p)!}.\]
Thus from (\ref{exp38}), we get an estimate
\[\left\|Y_{p,q}^k\right\|_2\leq C\left(\frac{(k-p)!}{2^{\gamma-1}(k-p+\gamma-1)!}\right)^{\frac{1}{2}}.\]
Using Lemma \ref{lemma6}, we can write
\begin{equation}\label{exp37}
\|Y_{p,q}^k\|_{L^\infty}\leq C\frac{(p+q+1)^{n-1}}{\left({2^{\gamma-1}(k-p+\gamma-1)!}\right)^{\frac{1}{2}}}.
\end{equation}
In order to prove that the series (\ref{exp5}) converges uniformly on each compact set
$K\subseteq\mathbb C^n,$ it is enough to prove that the series
\[h_p(z)=\sum_{q=m}^{\infty}P_{p,q}^k(z)\varphi_{k-p}^{\gamma-1}(z),\]
converges uniformly over each ball $B_R(o)\in\mathbb C^n,$ where $m=n+k-2p+2$ and
$\gamma=n+p+q.$ Since $q\geq m,$ it follow that $\gamma-1\geq k-p+1.$ Hence
\begin{eqnarray*}
\left|\varphi_{k-p}^{\gamma-1}(z)\right|
&=&\left|\sum_{j=0}^k(-1)^{j}\binom{k-p+\gamma-1}{k-p-j}\frac{\frac{1}{2}|z|^2}{j!}e^{-\frac{1}{4}|z|^2}\right|\\
&\leq&\binom{k-p+\gamma-1}{k-p}\left|\sum_{j=0}^k\frac{\frac{1}{2}|z|^2}{j!}e^{-\frac{1}{4}|z|^2}\right|\\
&\leq& \frac{(k-p+\gamma-1)!}{(k-p)!(\gamma-1)!}~e^{\frac{1}{4}|z|^2}.
\end{eqnarray*}
Let $|z|\leq R.$  Then we can write
\begin{eqnarray*}
|h_p(z)|&\leq&\sum_{q=m}^{\infty}\|Y_{p,q}\|_{\infty}|z|^{p+q}\left|\varphi_{k-p}^{\gamma-1}(z)\right|\\
        &\leq& C|R|^p~e^{\frac{1}{4}|R|^2}\sum_{q=m}^{\infty}\frac{(p+q+1)^{n-1}(k-p+\gamma-1)!}
        {\left({2^{\gamma-1}(k-p+\gamma-1)!}\right)^{\frac{1}{2}}(\gamma-1)!}|R|^q\\
        &=&C|R|^p~e^{\frac{1}{4}|R|^2}\sum_{q=m}^{\infty}~b_q~|R|^q.\\
\end{eqnarray*}
By a straightforward calculation we get $\lim_{q\rightarrow\infty}\frac{b_{q+1}}{b_q}=0<1.$
Hence each of the function $h_p$ is real analytic on $\mathbb C^n.$ That is, the right-hand
side of (\ref{exp5}) is a real analytic function which agreeing to the real analytic function
$Q_k$ a.e. on $\mathbb C^n.$ Hence (\ref{exp5}) is a real analytic expansion of $Q_k.$
\end{proof}

\begin{remark}\label{rk3}
$(a).$ Since the spectral projections $Q_k=f\times\varphi_k^{n-1}$ is a real analytic
function for $f\in L^r(\mathbb C^n),~1\leq r\leq\infty,$ therefore, it is feasible to
ask the question of finding a real analytic expansion for $Q_k$ as similar to (\ref{exp5}).

$(b).$ Let $f\in L^2(\mathbb C^n)$ and for each $k\in\mathbb Z_+,$ the projection
$e^{\frac{1}{4}|z|^2}Q_k(z)$ is a polynomial. Then there exits $m=m(k)\in\mathbb Z_+$ such that
\[Q_k(z)=\sum_{p=0}^{k}\sum_{q=0}^m P_{p,q}^{k}(z)\varphi_{k-p}^{n+p+q-1}(z),~P_{p,q}^{k}\in H_{p,q}.\]
The space consists of functions $f$ is much larger than $U(n)$-finite functions in
$L^2(\mathbb C^n).$ For this class of functions, we can ask the following question.
Does $Q_k(z)=0,$ for all $z\in \mathbb C^{n-1}\times\mathbb R$ and $\forall ~k\in\mathbb Z_+,$
implies $f=0$ a.e. on $\mathbb C^n$? This question for the case $n=1,$ is simpler and has been
done by equating the highest degree term of $e^{\frac{1}{4}|z|^2}Q_k$ to zero, (see \cite{Sri2},
Theorem 3.4).
\end{remark}

Using expansion (\ref{exp5}) for the spectral projections, we deduce the following result.
\begin{proposition}\label{prop1}
Let $f\in L^p(\mathbb C^n),~1\leq p\leq2.$ Suppose $f\times\mu_r(z)=0,$
$~\forall ~z\in C$ and $~\forall ~r>0.$ Then $f=0$ a.e. on $\mathbb C^n$
if and only if $C\nsubseteq P^{-1}_{s,t}(0),$ $\forall ~s,~t\in\mathbb Z_+.$
\end{proposition}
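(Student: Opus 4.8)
The plan is to reduce the vanishing of the twisted spherical means $f\times\mu_r$ on the complex cone $C$ to a pointwise statement about the spectral projections $Q_k=f\times\varphi_k^{n-1}$, and then apply the real analytic expansion \eqref{exp5}. First I would observe that, since $\mu_r$ is radial, convolving $f\times\mu_r=0$ (valid for all $r>0$) against $\varphi_k^{n-1}$ in the radial variable and using the orthogonality/eigenfunction property $\varphi_j^{n-1}\times\varphi_k^{n-1}=(2\pi)^n\delta_{jk}\varphi_k^{n-1}$ together with the polar decomposition $d\mu_r$, one gets $Q_k(z)=f\times\varphi_k^{n-1}(z)=0$ for every $z\in C$ and every $k\in\mathbb Z_+$. (One may first mollify by a radial approximate identity $\varphi_\epsilon$ as in the proof of Lemma \ref{lemma7} to legitimize the manipulations for $f\in L^p$, $1\leq p\leq 2$, and then pass to the limit.) Conversely, since the $\varphi_k^{n-1}$'s, suitably weighted, reconstruct $f$ (Lemma \ref{lemma1C} in the radial case, and more generally the special Hermite expansion), $Q_k\equiv 0$ for all $k$ forces $f\times\mu_r=0$ identically, and the whole question becomes: when does $Q_k|_C\equiv 0$ for all $k$ imply $f=0$?

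Next I would feed the expansion \eqref{exp5},
\[
Q_k(z)=\sum_{p=0}^{k}\sum_{q=0}^\infty P_{p,q}^{k}(z)\,\varphi_{k-p}^{n+p+q-1}(z),
\]
into the hypothesis $Q_k|_C=0$. Using the cone property $\lambda C\subseteq C$ for all $\lambda\in\mathbb C$: for $z\in C$ and $\lambda\in\mathbb C$, the point $\lambda z$ also lies in $C$, so $Q_k(\lambda z)=0$. The function $\lambda\mapsto Q_k(\lambda z)$ is real analytic in $(\Re\lambda,\Im\lambda)$, and $P_{p,q}^k$ is bihomogeneous of bidegree $(p,q)$ while $\varphi_{k-p}^{n+p+q-1}(\lambda z)$ depends on $|\lambda|^2|z|^2$ only; expanding in powers of $|\lambda|^2$ and separating the homogeneity in $\lambda,\bar\lambda$ lets me peel off, for each fixed total degree, that $\sum_q P_{p,q}^k(z)\,\big(\text{coefficient}\big)=0$ on $C$, and ultimately that each harmonic component $P_{p,q}^k$ vanishes on $C$. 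Since $P_{p,q}^k\in H_{p,q}$, i.e. it is (a finite linear combination, hence a single element up to choosing a basis) a bigraded homogeneous harmonic polynomial, $P_{p,q}^k|_C=0$ means $C\subseteq (P_{p,q}^k)^{-1}(0)$. Under the standing assumption $C\nsubseteq P_{s,t}^{-1}(0)$ for all $s,t$ — interpreting this as: $C$ is not contained in the zero set of any nonzero bigraded homogeneous harmonic polynomial — we conclude $P_{p,q}^k\equiv 0$ for all $p,q,k$, hence $Q_k\equiv 0$ on all of $\mathbb C^n$ for every $k$, hence $f=0$ a.e. by the inversion of the special Hermite expansion. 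The converse direction is immediate: if $C\subseteq P_{s,t}^{-1}(0)$ for some nonzero $P_{s,t}\in H_{s,t}$, then taking $f=\tilde a\,P_{s,t}$ with a suitable radial $\tilde a\in L^p$ (chosen so that $\tilde a\times\varphi_{k-s}^{n+s+t-1}$ is controlled), Lemma \ref{lemma8}/Theorem \ref{th3} gives $Q_k(z)=(2\pi)^{-n}c_{k}P_{s,t}(z)\varphi_{k-s}^{n+s+t-1}(z)$, which vanishes on $C$ for all $k$, so $f\times\mu_r$ vanishes on $C$ while $f\neq 0$; this exhibits the failure of injectivity and shows the condition is sharp.

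The main obstacle I expect is the middle step: extracting from $\sum_{q\ge 0}P_{p,q}^k(z)\varphi_{k-p}^{n+p+q-1}(z)=0$ on the cone the conclusion that each individual $P_{p,q}^k$ vanishes on $C$. The Laguerre factors $\varphi_{k-p}^{n+p+q-1}$ are not linearly independent as functions of a single real variable in any cheap way, so one cannot simply "read off coefficients"; the scaling trick $z\mapsto\lambda z$ is what saves the day, because bihomogeneity in $\lambda$ is a genuinely separating device. Care is needed because the real analyticity established in Theorem \ref{th4} is exactly what guarantees that vanishing on $C$ (which may be a very thin set, e.g. a single complex line) propagates correctly once we use the $\mathbb C$-dilation invariance; I would want to make precise that restricting a convergent real analytic series to the complex line $\mathbb C z_0\subseteq C$ and then using the identity theorem in one complex variable (or rather in the real variable $t=|\lambda|^2$) is legitimate term by term, which follows from the uniform-on-compacta convergence proved in Theorem \ref{th4}. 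The remaining pieces — the mollification argument and the reconstruction of $f$ from its projections — are routine and parallel to Lemma \ref{lemma7}.
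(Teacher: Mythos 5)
Your proposal is correct and follows essentially the same route as the paper: polar decomposition to reduce the hypothesis to $Q_k|_C=0$ for all $k$, the expansion \eqref{exp5} from Theorem \ref{th4}, and the complex-dilation trick $z\mapsto re^{i\theta}z$ combined with orthogonality of $\{e^{i\beta\theta}\}$ to isolate each $P_{s,t}^k$ on $C$, concluding via the special Hermite expansion. Your explicit construction of the counterexample $f=\tilde a\,P_{s,t}$ for the converse is a small but welcome addition that the paper leaves implicit.
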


\begin{proof}
Since $f\in L^p(\mathbb C^n),~1\leq p\leq2,$ therefore, by convolving $f$ with
a right and radial compactly supported smooth approximate identity, we can assume
$f\in L^2(\mathbb C^n).$
Given that $f\times\mu_r(z)=0,$ $~\forall ~z\in C$ and $~\forall ~r>0.$
By polar decomposition, we can write
\begin{equation}\label{exp10}
Q_k(z)=f\times \varphi_{k}^{n-1}(z)=\int_{r=0}^{\infty}f\times\mu_r(z)\varphi_{k}^{n-1}(r){r}^{2n-1}dr=0,
\end{equation}
$~\forall ~z\in C$ and $~\forall ~k\in\mathbb Z_+.$ Therefore, by Theorem \ref{th4} we get
\[Q_k(z)=\sum_{s=0}^{k}\sum_{t=0}^\infty P_{s,t}^{k}(z)\varphi_{k-s}^{n+s+t-1}(z)=0,\]
$~\forall ~z\in C$ and $\forall ~k\in\mathbb Z_+.$
Since cone $C$ is closed under complex scaling, therefore, for $z\in C,$ it implies
$re^{i\theta}z\in C,$ for all $r>0$ and $\theta\in\mathbb R.$ Thus,
\[Q_k(re^{i\theta}z)=\sum_{s=0}^{k}\sum_{t=0}^\infty r^{s+t}e^{i(s-t)\theta}P_{s,t}^{k}(z)\varphi_{k-s}^{n+s+t-1}(rz)=0,\]
$~\forall ~r>0$ and $~\forall ~\theta\in\mathbb R.$ Since $Q_k$ is real analytic in $r,$ therefore
by equating the coefficients of $1, r, r^2,\ldots$ to zero, we conclude that
\begin{equation}\label{exp11}
\sum_{s+t=\alpha, ~s\leq k}\binom{n+k+t-1}{k-s} e^{i(s-t)\theta}P_{s,t}^{k}(z)=0,
\end{equation}
$\forall~\alpha\in\mathbb Z_+.$ Using the fact that the set $\{e^{i\beta\theta}: ~\beta\in\mathbb Z\}$
is an orthogonal set and the sum vanishes over each of the diagonal $s+t=\alpha,~s\leq k,$
it follows that $P_{s,t}^k(z)=0,$ for all $s\leq k$ and $t\in\mathbb Z_+.$
Since the equation (\ref{exp11}) is valid for each $k\in\mathbb Z_+.$ Therefore, we infer that
for $z\in C,$ the projections $Q_k(z)=0,$ $\forall ~k\in\mathbb Z_+$ if and only if $P_{s,t}(z)=0,$
for all $s, ~t\in\mathbb Z_+.$ This forces $Q_k\equiv0,$  $\forall ~k\in\mathbb Z_+$
if and only if
$C \nsubseteq P^{-1}_{s,t}(0),$ $\forall ~s,~t\in\mathbb Z_+.$
As $f\in L^2(\mathbb C^n),$ we can expand $f$ in terms of  its spectral projections as
\begin{equation}\label{exp12}
f=(2\pi)^{-n}\sum_{k=0}^\infty f\times\varphi_k^{n-1},
\end{equation}
where series in the right hand side converges in $L^2(\mathbb C^n).$ This is known as
the special Hermite expansion, (see \cite{T}, p.58). Hence in view of (\ref{exp12}),
 we reach to the conclusion that
 $f=0$ a.e. on $\mathbb C^n$ if and only if $C\nsubseteq P^{-1}_{s,t}(0),$
$\forall ~s,~t\in\mathbb Z_+.$
\end{proof}

\begin{remark}
We have observed that Proposition \ref{prop1} works for the class of all continuous
functions on $\mathbb C^n.$ However, proof for the class of continuous functions is
different than that of $L^p(\mathbb C^n),~1\leq p\leq2,$ because the expansion
(\ref{exp5}) is valid only for the functions in $L^2(\mathbb C^n).$ Hence, its
proof should be presented in full detail elsewhere.
\end{remark}

\noindent{\bf Acknowledgements:}
The author wishes to thank E. K. Narayanan for several fruitful discussions during my visit
to IISc, Bangalore. The author would also like to gratefully acknowledge the support provided
by the University Grant Commission and the Department of Atomic Energy, Government of India.


\begin{thebibliography}{11}
\bibitem{AR} M. L. Agranovsky and R. Rawat, {\em Injectivity sets for spherical means on the Heisenberg group,}
J. Fourier Anal. Appl., 5 (1999), no. 4, 363--372.

\bibitem{AVZ} M. L. Agranovsky, V. V. Volchkov and L. A. Zalcman, {\em Conical uniqueness sets
for the spherical Radon transform,} Bull. London Math. Soc. 31 (1999), no. 2, 231–-236.

\bibitem{BDJ} C. Berenstein, Der-Chen Chang, J. Tie, {\em Laguerre calculus and its applications on the Heisenberg group,}
 AMS/IP Studies in Advanced Mathematics, 22.

\bibitem{BGGV} R. W. Beals, B. Gaveau, P. C.  Greiner, J. Vauthier, {\em The Laguerre calculus on the Heisenberg group,}
 II. Bull. Sci. Math. (2) 110 (1986), no. 3, 225–-288.

\bibitem{D} C. F. Dunkl, {\em Boundary value problems for harmonic functions on the Heisenberg group,}
 Canad. J. Math. 38 (1986), no. 2, 478–-512.

\bibitem{EMOT} A. Erdlyi, W. Magnus, F. Oberhettinger and F. G. Tricomi,
 {\em Tricomi and Francesco G. Higher transcendental functions} (Based on notes left by Harry Bateman),
Vol. I. McGraw-Hill, New York, 1953.

\bibitem{FL} M, Filaseta and  T-Y, Lam, {\em On the irreducibility of the generalized Laguerre polynomials,}
Acta Arith.  105 (2002),  no. 2, 177--182.

\bibitem{Ge} D. Geller, {\em Spherical harmonics, the Weyl transform and the Fourier transform on the Heisenberg group,}
Canad. J. Math.  36  (1984),  no. 4, 615--684.

\bibitem{Gr1} P. C. Greiner, {\em Spherical harmonics on the Heisenberg group,} Canad. Math. Bull. 23 (1980), no. 4, 383–-396.

\bibitem{Gr2} P. C. Greiner, {\em On the Laguerre calculus of left-invariant convolution (pseudodifferential)
operators on the Heisenberg group,}
Goulaouic-Meyer-Schwartz Seminar, 1980–-1981, Exp. No. XI, 40 pp., École Polytech., Palaiseau, 1981.

\bibitem{N} R. Narasimhan, {\em Analysis on real and complex manifolds,} North-Holland Publishing Co., Amsterdam, 1985.

\bibitem{NRR} E. K. Narayanan, R. Rawat and S. K.  Ray, {\em Approximation by $K$-finite functions in $L\sp p$ spaces,}
Israel J. Math.  161  (2007), 187--207.

\bibitem{NT1} E. K. Narayanan and S. Thangavelu, {\em Injectivity sets for spherical means on the
Heisenberg group,} J. Math. Anal. Appl. 263 (2001), no. 2, 565--579.

\bibitem{Ru} W. Rudin, {\em Function theory in the unit ball of ~$\mathbb C^n$,} Springer-Verlag,
 New York-Berlin, 1980.

\bibitem{ST} G. Sajith and S. Thangavelu, {\em On the injectivity of twisted spherical means on ~$\mathbb C^n$,}
 Israel J. Math. 122 (2001), 79--92.

\bibitem{Sri} R. K. Srivastava, {\em Sets of injectivity for weighted twisted spherical means and support theorems,}
J. Fourier Anal. Appl.,  18 (2012), no. 3, 592--608.

\bibitem{Sri2} R. K. Srivastava, {\em Coxeter system of lines are sets of injectivity for the twisted
spherical means on $\mathbb C$,} (communicated).
DOI:~{\color{blue}\href{http://arxiv.org/find/all/1/au:+srivastava_r_k/0/1/0/all/0/1}{arXiv:1103.4571v1}}

\bibitem{SW} E. M. Stein and G. Weiss, {\em Introduction to Fourier analysis on Euclidean spaces,}
 Princeton Mathematical Series, No. 32. Princeton University Press, Princeton, N.J., 1971.

\bibitem{S} M. Sugiura, {\em Unitary representations and harmonic analysis}, North-Holland Mathematical Library, 44.
North-Holland Publishing Co., Amsterdam; Kodansha, Ltd., Tokyo, 1990.

\bibitem{T} S. Thangavelu, {\em An introduction to the uncertainty principle}, Prog. Math. 217, Birkhauser, Boston (2004).

\end{thebibliography}
\end{document}